\newcommand{\margnote}[1]{
\ifthenelse{\boolean{shownotes}}%
{\marginpar{\raggedright\tiny\texttt{#1}}}%
{}%
}
\newcommand{\hole}[1]{
\ifthenelse{\boolean{shownotes}}%
{\begin{center} \fbox{ \rule {.25cm}{0cm}
\rule[-.1cm]{0cm}{.4cm} \parbox{.85\textwidth}{\begin{center}
\texttt{#1}\end{center}} \rule {.25cm}{0cm}}\end{center}}
{}
}
 \title[Relaxation to gradient flows]{From gas dynamics with large friction to gradient flows describing diffusion theories}
\author{Corrado Lattanzio}
\address[Corrado Lattanzio]{\newline 
Dipartimento di Ingegneria e Scienze dell'Informazione e Matematica
\newline
Universit\`a degli Studi dell'Aquila
\newline
Via Vetoio
\newline
I-67010 Coppito (L'Aquila) AQ 
\newline 
Italy
}
\email{corrado@univaq.it}
\author{Athanasios E. Tzavaras}
\address[Athanasios E. Tzavaras]{
\newline
Computer, Electrical and Mathematical Science and Engineering Division 
\newline
King Abdullah University of Science and Technology (KAUST)
\newline 
Thuwal 23955-6900,  Saudi Arabia
%
}
\email{athanasios.tzavaras@kaust.edu.sa}
\numberwithin{equation}{section}
\newtheorem{theorem}{Theorem}[section]
\newtheorem{lemma}[theorem]{Lemma}
\newtheorem{proposition}[theorem]{Proposition}
\newtheorem{definition}[theorem]{Definition}
\theoremstyle{remark}
\newtheorem{remark}[theorem]{Remark}
\newcommand{\X}{C_{\mathrm{x}}}
\newcommand{\C}{C_\kappa}
\newcommand{\R}{\mathbb{R}}
\newcommand{\I}{\mathbb{I}}
\newcommand{\T}{\mathbb T}
\newcommand{\e}{\varepsilon}
\newcommand{\eps}{\varepsilon}
\newcommand{\cF}{\mathcal{F}}
\newcommand{\dx}{\mathop{{\rm div}_{x}}}
\newcommand{\pxi}{\partial_{x_{i}}}
\newcommand{\cE}{\mathcal{E}}
\newcommand{\brho}{\bar\rho}
\newcommand{\baru}{\bar u}
\newcommand{\barm}{\bar m}
\def\charf {\mbox{{\text 1}\kern-.30em {\text l}}}
\def\del{\partial}
\begin{document}
\allowdisplaybreaks



\subjclass[2010]{35L65, 35B25, 35K55, 35Q31, 76N15}

\keywords{relative energy,  diffusive relaxation, gradient flows, Euler-Poisson, Keller-Segel system, Cahn-Hilliard equation}



\begin{abstract}
We study the emergence of gradient flows in Wasserstein distance as high friction limits of an abstract Euler flow
generated by an energy functional. We develop a relative energy calculation that connects  the Euler flow to the gradient flow
in the diffusive limit regime. We apply this approach to prove convergence from the Euler-Poisson system with friction
to the Keller-Segel system in the regime that the latter has smooth solutions. The same methodology is used to 
establish convergence from the Euler-Korteweg theory with monotone pressure laws to the Cahn-Hilliard equation.
\end{abstract}

\maketitle


\tableofcontents

\section{Introduction}

Following the works of  Jordan-Kinderlehrer-Otto \cite{JKO98} and  Otto \cite{Ott01} a large interest 
was generated for diffusive equations induced as gradient flows of functionals in  the form:
\begin{equation}\label{eq:intro1}
\rho_t - \dx \left(\rho\nabla_x \frac{\delta \mathcal{E}(\rho)}{\delta \rho }\right ) =0\, .
\end{equation}
A key novelty of the approach introduced in these papers is the  use of the Wasserstein space of probability measures as a framework where the gradient flow is considered; for a complete theory we refer to the monograph \cite{AGS05}.

The objective of this work is to explore the induction of such diffusion problems as high friction limits
of abstract Euler flows of the form
\begin{equation}\label{eq:intro2}
\begin{cases}
\displaystyle{\frac{\del \rho}{\del t} + \dx (\rho u) = 0 } & \\
& \\
\displaystyle{\rho \frac{\partial u}{\partial t} + \rho u \cdot \nabla_x u= - \rho \nabla_x \frac{\delta \cE}{\delta \rho} - \zeta \rho u}\, , &
\end{cases}
\end{equation}
where $\zeta > 0$ is a (large)  friction coefficient $\zeta>0$ and  $\cE(\rho)$ is a functional on the density that generates the evolution.
This problem is introduced in \cite{GLT15} (the Hamiltonian flow case $\zeta=0$) with the objective to put in a common framework several commonly used systems in applications,
like the Euler equations, the Euler-Poisson system and the Euler-Korteweg theory. In this work we study the emergence of the system \eqref{eq:intro1}
from the  system \eqref{eq:intro2} in the high friction regime $\zeta \to \infty$.

This type of problem belongs to the general realm of diffusive limits, which has been addressed in various contexts with several techniques; 
we refer to \cite{DM04} for a survey. The simplest example  of a high friction limit  that fits within the present functional framework 
(from \eqref{eq:intro2} to  \eqref{eq:intro1})   is the limit from the Euler system with friction  to the porous media equation. This has been addressed
again with various methodologies, see {\it e.g.}  \cite{MM90,HMP05,HPW11} and in particular \cite{LT12a} using  the relative energy method adopted here.

We develop a general methodology for treating the diffusive limit  from \eqref{eq:intro2} to  \eqref{eq:intro1} and 
apply it to two examples:  First, we consider  generalized Keller-Segel type models 
\begin{equation}
\label{intro-KS}
    \begin{cases}
	\displaystyle{\rho_{t} =  \dx \big ( \nabla_{x} p(\rho)  -  \X\rho \nabla_{x} c   \big ) }&   \\[4pt]
	- \triangle_{x} c + \beta c = \rho  - <\rho>.
    \end{cases}
\end{equation}   
as high friction imits of  the Euler-Poisson system  with attractive potentials ($\X>0$) and friction:
  \begin{equation}
\label{intro-EP}
    \begin{cases}
	\displaystyle{\rho_{t} + \dx m =0}&   \\[4pt]
       \displaystyle{ m_{t} +  \dx \frac{m\otimes m}{\rho} 
       +  \nabla_{x}p(\rho) 
	= - \zeta m} + {\X}\rho \nabla_{x} c & \\[4pt]
	-\triangle_{x} c + \beta c=  \rho - < \rho >.
    \end{cases}
\end{equation} 
This example corresponds to the choice of the functional 
\begin{equation*}
\mathcal{E}(\rho) = \int \left ( h(\rho) - \tfrac{1}{2}\X\rho c\right )dx\, ,
\end{equation*}
where $h$ and $p$ are linked by the thermodynamic consistency relations $\rho h''(\rho) = p'(\rho)$, $\rho h'(\rho) = p(\rho) + h(\rho)$, while
$c$  is the solution of the Poisson equation 
\begin{equation*}
- \triangle_{x} c + \beta c = \rho \, - <\rho>, \quad <\rho> = \int \rho dx,\ \beta\geq 0\, ,
\end{equation*}
normalized by requiring $<c> = 0$ for $\beta = 0$.
  For alternative methodologies on this problem see \cite{DiFD10, LT12b}; related models 
in the context of semiconductors devices  with repulsive potentials $\X < 0$  are analyzed in \cite{MN95a,LM99,Lat00}. 
For a study of the limiting Keller-Segel model \eqref{intro-KS} as a gradient flows we refer to \cite{BCC08}.
   
 As a second paradigm entering into this framework,
we consider the Euler-Korteweg system with friction 
   \begin{equation}
   \label{intro-EK}
    \begin{cases}
	\displaystyle{\rho_{t} +\dx m =0} &   \\[6pt]
       \displaystyle{ m_{t} + \dx \frac{m\otimes m}{\rho} 
	= -\zeta m -  \rho \nabla_x \big (h'(\rho) - C_\kappa \triangle_x\rho \big)}& \\
    \end{cases}
\end{equation}
converging in the high-friction regime to the Cahn-Hilliard equation
\begin{equation}
\label{intro-CH}
	\rho_{t} =\dx \left (\rho \nabla_{x}\big (h'(\rho) - C_\kappa \triangle_x\rho \big) \right )  = 
	\dx \big ( \nabla_{x}p(\rho) - C_\kappa\rho  \nabla_{x}\triangle_x\rho \big )\, ,
\end{equation}
which  corresponds to the choice of functional
\begin{equation*}
\mathcal{E}(\rho) = \int \left (h(\rho) +\tfrac12 C_\kappa |\nabla_x \rho |^2 \right ) dx, \quad C_\kappa>0\, .
\end{equation*}

The technical tool  consists of  a functional form of the relative energy identity  introduced in \cite{GLT15}, and inspired by \cite{LT12a,LT12b}
and the relative energy calculations of  Dafermos \cite{Dafermos79,Dafermos79a}. 
The relative energy  monitors the distance between  solutions in appropriate norms pertinent  to the aforementioned equations \eqref{eq:intro2} to  \eqref{eq:intro1}.
It provides a very efficient tool to carry out the limiting process, as it is precisely adapted to the functional
framework of both problems \eqref{eq:intro1} and  \eqref{eq:intro2}.

The outline of this work is as follows. 
In Section \ref{sec:largefrictvsgradflows} we introduce the  relative kinetic energy 
\begin{equation*}
K ( \rho, m | \brho, \barm ) : =   \frac{1}{2} \int \rho | u - \baru|^2 \, dx\, ,
\end{equation*} 
and the relative potential energy
\begin{equation*}
\cE (\rho | \brho) := \cE(\rho) - \cE (\brho) - \big \langle \frac{\delta \cE}{\delta \rho} (\brho) , \rho - \brho \big \rangle\, ,
\end{equation*} 
and use them to derive an identity for the distance between two solutions  of \eqref{eq:intro2}; see \eqref{reltote} in Section \ref{subset:twohypsol}.
The same tool is used in order to measure the distance between solutions of \eqref{eq:intro2} and \eqref{eq:intro1} in Section \ref{subset:1hyp1gf}. 
It provides a yardstick to measure the distance in  the relaxation limit. The identity carries seamlessly to the limit and 
yields an identity between two solutions of \eqref{eq:intro1} in terms of the relative potential energy; see \eqref{relpotegrad} in Section \ref{subset:limit only}.
After this formal calculation, we study the relaxation limits from weak solutions of the hyperbolic relaxing model  toward strong solutions of the diffusive equations.
This is carried out in two cases:  from the  Euler-Poisson systems with attractive potentials towards  Keller-Segel type models in Section \ref{sec:KS}, and from 
the Euler-Korteweg system with friction toward  the Cahn-Hilliard equation  in Section \ref{sec:korteweg}.

%
%
%
\section{A large friction theory converging toward gradient flows}\label{sec:largefrictvsgradflows}
We start our analysis by presenting a relaxation theory of large friction converging  towards gradient flow dynamics. 
This formalism  will unify  in a common framework the results on covergence from the compressible Euler system with friction to the porous media equation 
obtained in \cite{LT12a}, 
with convergence results towards Keller-Segel type systems  (see  \cite{LT12b} for preliminary results in this direction), 
or towards the Cahn-Hilliard equation, obtained in the following sections.
The specific cases will be obtained as particular examples of the general framework  via an appropriate choice of the entropy functional defining the flow of the limiting equation.

To this aim, let us consider the following system of equations consisting of a conservation of mass and a functional momentum equation
\begin{equation}
\label{funsys}
\begin{cases}
\displaystyle{\frac{\del \rho}{\del t} + \dx (\rho u) = 0 } & \\
& \\
\displaystyle{\rho \frac{\partial u}{\partial t} + \rho u \cdot \nabla_x u= - \rho \nabla_x \frac{\delta \cE}{\delta \rho} - \zeta \rho u}\, , &
\end{cases}
\end{equation}
where $\rho \ge  0$ is the density and $u$ is the velocity. 
Moreover,  $\displaystyle{\frac{\delta \cE}{\delta \rho}}$ stands for  the generator of the
first variation of the functional $\cE(\rho)$ (see the discussion in \cite{GLT15}), and   
the term $-\zeta \rho u$   accounts for a damping force with
frictional coefficient $\zeta > 0$. For large frictions $\zeta = \tfrac1\e$, after a proper scaling of time  $\partial_t \mapsto \e \partial_t$, 
\eqref{funsys} is rewritten as 
\begin{equation}
\label{funsys_scaled}
\begin{cases}
\displaystyle{\frac{\del \rho}{\del t} + \frac1\e\dx (\rho u) = 0
} 
&
\\ &
\\[-5pt]
\displaystyle{\rho \frac{\partial u}{\partial t} + \frac1\e \rho u \cdot \nabla_x u= -\frac{1}{\e^2}\rho u -  \frac1\e \rho \nabla_x \frac{\delta \cE}{\delta \rho}   }\, , &
\end{cases}
\end{equation}
or, in terms of   $(\rho, m = \rho u)$
\begin{equation}\label{eq:frictapprox}
 \begin{cases}
	\displaystyle{\rho_{t} +\frac1\e\dx m =0}&   \\
	& \\[-5pt]
       \displaystyle{ m_{t} +\frac1\e \dx \frac{m\otimes m}{\rho} 
       	= -\frac{1}{\e^{2}} m - \frac1\e  \rho\nabla_x \frac{\delta \mathcal{E}(\rho)}{\delta \rho }} \, . &
    \end{cases}
\end{equation}
Note that \eqref{eq:frictapprox} is in conservation form except for the term $\rho \nabla_x \frac{\delta \cE}{\delta \rho}$.
Nevertheless, for all examples treated in this paper, we have
\begin{equation}
\label{stresshyp}
- \rho \nabla_x \frac{\delta \cE}{\delta \rho} =  \nabla_x \cdot S \, ,
\end{equation}
where $ S =S (\rho)$ will be a tensor-valued functional on $\rho$ that plays the role of a stress tensor with components $S_{i j} (\rho)$ with 
$i, j = 1, ... , d$. We refer to \cite{GLT15} for a discussion of the ramifications of that property.

As $\e\downarrow 0$ in \eqref{eq:frictapprox},  we formally obtain the gradient flow dynamic 
\begin{equation}\label{eq:gf}
\rho_t - \dx \left(\rho\nabla_x \frac{\delta \mathcal{E}(\rho)}{\delta \rho }\right ) =0\, .
\end{equation}
The objective of this section is to describe this large friction limit using the relative energy identities
induced by the  functional framework.
Particular examples will  include various interesting systems (see the examples in \cite{GLT15}) and in particular:
\begin{enumerate}
\item the porous medium equation as limit of the Euler equation with friction   \cite{LT12a} corresponds to the choice
\begin{equation*}
\mathcal{E}(\rho) = \int h(\rho) dx\, ;
\end{equation*}

\item the Keller-Segel system as limit of the  Euler-Poisson system with friction (in the case of  attractive potentials), considered in Section \ref{sec:KS},
 is given by the functional 
\begin{equation*}
\mathcal{E}(\rho) = \int \left ( h(\rho) - \tfrac{1}{2}\X\rho c\right )dx\, ,
\end{equation*}
where $\X>0$ and $c$ is viewed as a constraint in terms of the relation 
\begin{equation*}
- \triangle_{x} c + \beta c = \rho - <\rho>, \quad <\rho> = \int \rho dx,\ \beta\geq 0\, ;
\end{equation*}

\item the  Cahn-Hilliard  equation as limit of the the Euler-Korteweg  system with friction  corresponds to the choice
\begin{equation*}
\mathcal{E}(\rho) = \int \left (h(\rho) +\tfrac12 C_\kappa |\nabla_x \rho |^2 \right ) dx, \quad C_\kappa>0\, 
\end{equation*}
and is  investigated in Section \ref{sec:korteweg}.

\end{enumerate} 

\subsection{The energy equation}\label{subset:tenergy}
We start by reviewing and adapting to the relaxation framework certain results from \cite{GLT15}. 
First,  we derive the energy estimate for \eqref{funsys_scaled} or \eqref{eq:frictapprox} in the functional setting.
We assume that the directional derivative (Gateaux derivative) of the functional 
$\cE$ defined by 
\begin{equation*}
d\cE (\rho ; \psi ) = \lim_{\tau \to 0}  \frac{\cE (\rho + \tau \psi) - \cE (\rho)}{\tau} = \frac{d}{d\tau} \cE (\rho + \tau \psi ) \Big |_{\tau = 0}
\end{equation*}
is linear in $\psi$ and can be 
represented via a duality bracket
\begin{equation}
\label{funchyp1}
d\cE (\rho ; \psi ) = \frac{d}{d\tau} \cE (\rho + \tau \psi ) \Big |_{\tau = 0} 
= \big \langle \frac{\delta \cE}{\delta \rho }(\rho) , \psi  \big \rangle\, ,
\end{equation}
with $\frac{\delta \cE}{\delta \rho }(\rho)$ standing for the generator of the bracket. This
property is always satisfied for Frechet differentiable functionals.
Using \eqref{stresshyp}, the potential energy is computed via
\begin{equation}
\label{pote}
\frac{d}{dt} \cE (\rho) =  \langle \frac{\delta \cE}{\delta \rho} (\rho) , \rho_t \rangle 
=
- \frac1\e \langle \frac{\delta \cE}{\delta \rho} (\rho) , \dx (\rho u) \rangle
=
   \frac1\e\int S  : \nabla_x u \, dx\, .
\end{equation}
Now, using again \eqref{stresshyp} and   the momentum equation  \eqref{eq:frictapprox}$_2$ with the standard multiplier $u$, we obtain the usual kinetic energy relation
 \begin{align*}
 \frac{1}{2}\frac{d}{dt} \int \rho |u|^2 dx &= -   \frac{1}{\e^2}\int \rho |u|^2 dx  -  \frac1\e\int  \rho u \cdot \nabla_x \frac{\delta \mathcal{E}(\rho)}{\delta \rho }dx \\
 &  = -  \frac{1}{\e^2}\int \rho |u|^2 dx - \frac1\e\int    S  : \nabla_x u \, dx\, ,
\end{align*}
which, added to \eqref{pote}, finally leads to the standard energy relation
 \begin{equation}\label{eq:energy}
\frac{d}{dt} \left ( \mathcal{E}(\rho) +  \frac{1}{2}\int \rho |u|^2 dx  \right )+\frac{1}{\e^2}\int \rho |u|^2dx =0\, .
\end{equation}

\subsection{Relative energy identity for the relaxation theory}\label{subset:twohypsol}
Next,  we compare two different solutions $(\rho, m)$, $(\bar \rho, \bar m)$ of \eqref{eq:frictapprox} using the relative entropy framework.
To this end, we define also
 the second variation of the functional $\cE(\rho)$ via
\begin{equation*}
d^2 \cE (\rho;  \psi, \varphi) = \lim_{\tau \to 0} 
\frac{ \big \langle \frac{\delta \cE}{\delta \rho }(\rho + \tau \varphi) , \psi  \big \rangle - \big \langle \frac{\delta \cE}{\delta \rho }(\rho) , \psi  \big \rangle }{\tau}
\end{equation*}
(whenever the limit exists), and we assume that this can be represented as a bilinear functional in the form
\begin{equation}
\label{funchyp2}
d^2 \cE (\rho;  \psi, \varphi) = \lim_{\tau \to 0}  
\frac{ \big \langle \frac{\delta \cE}{\delta \rho }(\rho + \tau \varphi) , \psi  \big \rangle - \big \langle \frac{\delta \cE}{\delta \rho }(\rho) , \psi  \big \rangle }{\tau}
= \left \langle  \left \langle \frac{\delta^2 \cE }{\delta \rho^2} (\rho) , ( \psi, \varphi ) \right \rangle \right \rangle\, .
\end{equation}
Moreover, in analogy to \eqref{funchyp1}, we assume that the directional derivative of $S (\rho) $ is expressed as 
a linear functional via a duality bracket,
\begin{equation}
\label{funchyp3}
d S  (\rho ; \psi ) = \frac{d}{d\tau} S (\rho + \tau \psi ) \Big |_{\tau = 0} 
= \big \langle \frac{\delta S}{\delta \rho }(\rho) , \psi  \big \rangle \, ,
\end{equation}
in terms of the generator $\frac{\delta S}{\delta \rho }(\rho)$.
\subsubsection{The relative potential energy} \label{sec-funcrel-pote}
Define the  relative potential energy,
\begin{equation}
\label{relendef}
\cE (\rho | \brho) := \cE(\rho) - \cE (\brho) - \big \langle \frac{\delta \cE}{\delta \rho} (\brho) , \rho - \brho \big \rangle\, ,
\end{equation}
as the quadratic part of the Taylor series expansion of the functional $\cE$ with respect to a reference solution $\bar\rho(x,t)$. If $\cE (\rho)$ is convex,
this quantity  can serve as a measure of distance between the two solutions $\rho$ and $\bar\rho$.

Consider next the weak form of \eqref{stresshyp},
\begin{equation*}
\big \langle \frac{\delta \cE}{\delta \rho} (\rho) , \frac{\del}{\del x_j} (\rho \varphi_j) \big \rangle = - \int S_{i j} (\rho) \, \frac{\del \varphi_i}{\del x_j } \, dx\, .
\end{equation*}
This relation is viewed as a functional in $\rho$;  talking its directional derivative along a direction $\psi$,  
with $\psi$ a smooth test function, we obtain
\begin{equation*}
\begin{aligned}
 \left \langle  \left \langle \frac{\delta^2 \cE }{\delta \rho^2} (\rho) , \big ( \psi, \frac{\del}{\del x_j} (\rho \varphi_j) \big ) \right \rangle \right \rangle
& + 
  \left \langle  \frac{\delta \cE}{\delta \rho} (\rho) , \frac{\del}{\del x_j} (\psi \varphi_j) \right \rangle 
  \\
& =
 - \int  \left \langle  \frac{\delta S_{ij} }{\delta \rho} (\rho) , \psi  \right \rangle   \frac{\del \varphi_i }{\del x_j} \, dx \, . 
 \end{aligned}
 \end{equation*}
The two relations lead to (see \cite[Section 2.1]{GLT15} for the details of this computation):
\begin{equation}
\label{relpote}
\frac{d}{dt} \cE (\rho | \brho) =  \frac{1}{\e}  \int S_{i j} (\rho | \brho )  \frac{\del \baru_i }{\del x_j} \, dx
- \frac1\e \big \langle \frac{\delta \cE}{\delta \rho} (\rho) - \frac{\delta \cE}{\delta \rho} (\brho) , \dx \big (\rho (u - \baru)\big ) \big \rangle \, ,
\end{equation}
where $S ( \rho | \brho)$ stands for the relative stress tensor:
\begin{equation}
\label{relstress}
S ( \rho | \brho) := S(\rho) - S (\brho) - \big \langle \frac{\delta S}{\delta \rho} (\brho) , \rho - \brho \big \rangle\, .
\end{equation}

\subsubsection{The relative kinetic energy} \label{sec-funcrel-kine}
Next consider the kinetic energy 
\begin{equation}
\label{kinedef}
K (\rho, m) = \int \frac{1}{2} \frac{ |m|^2}{\rho} dx
\end{equation}
viewed as a (not strictly) convex functional on the density $\rho$ and the momentum $m = \rho u$. 
The relative kinetic energy is expressed in the form
\begin{equation}
\label{relkinenergy}
\begin{aligned}
K ( \rho, m | \brho, \barm ) &: = \int k(\rho, m) - k(\brho, \barm) - \nabla k (\brho, \barm) \cdot  (\rho - \brho, m - \barm) \, dx
\\
&= \int \frac{1}{2} \frac{ |m|^2}{\rho}  - \frac{1}{2} \frac{ |\barm|^2}{\brho} 
- \big ( -\frac{1}{2} \frac{|\barm |^2}{\brho^2} , \frac{\barm}{\brho} \big ) \cdot (\rho - \brho, m - \barm) \,  dx
\\
&=  \frac{1}{2} \int \rho | u - \baru|^2 \, dx\, ,
\end{aligned}
\end{equation}
To compute its evolution, consider the difference of the two equations satisfied by $(\rho, u)$ and $(\brho, \baru)$, that is
\begin{equation*}
\begin{aligned}
 \partial_t (u-\bar u) &+  \frac{1}{\e}(u\cdot\nabla_x)(u- \bar u) +  \frac1\e\big((u-\bar u)\cdot\nabla_x \big)\bar u 
 \\
 &= - \frac{1}{\e^{2}}  (  u - \bar u ) - \frac1\e     \nabla_x \left (\frac{\delta \mathcal{E}(  \rho)}{\delta \rho}
 - \frac{\delta \mathcal{E}(\bar \rho)}{\delta \rho }\right)\, .
 \end{aligned}
 \end{equation*}
Multiplying this relation by $u-\bar u$ we end up with
\begin{align*}
& \frac{1}{2}  \partial_t  |u-\bar u|^2+ \frac{1}{2\e}(u\cdot\nabla_x)|u- \bar u|^2 +
 \frac1\e \nabla_x \baru : (u - \baru) \otimes (u - \bar u) 
 \\
 &\quad = 
- \frac{1}{\e^2}|u - \bar u|^2   - \frac1\e(u - \bar u)\cdot   \nabla_x \left (\frac{\delta \mathcal{E}(  \rho)}{\delta \rho}
 - \frac{\delta \mathcal{E}(\bar \rho)}{\delta \rho }\right)\, ,
\end{align*}
which, using \eqref{eq:frictapprox}$_1$ and integrating over space leads to the balance of the  relative kinetic energy  
%
\begin{equation}
\label{relkine}
\begin{aligned}
&\frac{1}{2}\frac{d}{dt} \int  \rho |u - \baru|^2 \, dx +  \frac{1}{\e^2} \int \rho |u - \baru|^2 \, dx  =
\\
& - \frac1\e \int  \rho \nabla_x \baru : (u - \baru) \otimes (u - \bar u) \, dx 
+ \frac1\e \big \langle \frac{\delta \cE}{\delta \rho} (\rho) - \frac{\delta \cE}{\delta \rho} (\brho) , \dx\big( \rho (u - \baru) \big)\big \rangle  \, .
\end{aligned}
\end{equation}

\subsubsection{The functional form of the relative energy formula} \label{sec-funcrel-tote}
Summing  \eqref{relpote} to \eqref{relkine} we obtain the relative energy identity
\begin{equation}
\label{reltote}
\begin{aligned}
&\frac{d}{dt} \left ( \cE (\rho | \brho) + \frac12 \int   \rho |u - \baru|^2 \, dx \right ) +  \frac{1}{\e^2}   \int  \rho |u - \baru|^2 \, dx
\\
&\qquad \qquad =  \frac{1}{\e} \int  \nabla_x \baru : S (\rho | \brho )   \, dx  -  \frac{1}{\e} \int  \rho \nabla_x \baru : (u - \baru) \otimes (u - \bar u) \, dx \, ,
\end{aligned}
\end{equation}
where $\cE (\rho | \brho)$ and $S(\rho | \brho)$ stand for the relative potential energy and relative stress functionals 
defined in \eqref{relendef} and \eqref{relstress}, respectively.
The main property which leads to the above relation is the fact that 
 the contributions of the term
\begin{equation*}
D =  \frac{1}{\e} \big \langle \frac{\delta \cE}{\delta \rho} (\rho) - \frac{\delta \cE}{\delta \rho} (\brho) , \dx \big ( \rho (u - \baru)\big) \big \rangle 
\end{equation*}
in \eqref{relpote} and \eqref{relkine} offset each other, as for the terms involving the stress tensor $S$ in the derivation of the energy relation \eqref{eq:energy}.

\subsection{Confinement potentials}\label{sec:confinement}
It is expedient to give an extension of the calculation for systems driven by a confinement potential $V = V(x)$,
\begin{equation}
\label{funsys_conf}
\begin{cases}
\displaystyle{ \; \, \frac{\del \rho}{\del t} + \tfrac1\e\dx (\rho u) = 0
} 
&
\\[-5pt] &
\\
\displaystyle{\rho \frac{\partial u}{\partial t} + \tfrac1\e \rho u \cdot \nabla_x u= -\tfrac{1}{\e^2}\rho u -  \tfrac1\e \rho \nabla_x \frac{\delta \cE}{\delta \rho}   }
- \tfrac1\e \rho \nabla_x  V \, . &
\end{cases}
\end{equation}
The potential energy is now given by the functional
\begin{equation}
\label{functconf}
\cF (\rho) = \cE (\rho) + \int \rho V(x) \, dx
\end{equation}
and we require that $\cE (\rho)$ satisfies \eqref{stresshyp} for some stress functional $S(\rho)$. 
Note, that the potential energy functional splits into the potential energy of the contact forces
$\cE(\rho)$ and the potential energy of the body forces $\int \rho V$. The latter is not expected to be associated to a stress, and also is not invariant under
space translations (which is connected to  the hypothesis \eqref{stresshyp}). Under the hypothesis \eqref{stresshyp} for $\cE$, we may write the
weak form of \eqref{funsys_conf} for  $(\rho, m = \rho u)$
\begin{equation}\label{confin-weak}
 \begin{cases}
	\displaystyle{\rho_{t} +\tfrac1\e\dx m =0}&   \\
	& \\[-5pt]
       \displaystyle{ m_{t} +\tfrac1\e \dx \frac{m\otimes m}{\rho} 
       	= -\tfrac{1}{\e^{2}} m - \tfrac1\e  \nabla_x \cdot S (\rho) - \tfrac1\e \rho \nabla_x  V }\, . &
    \end{cases}
\end{equation}

Proceeding along the lines of the calculations in Section \ref{subset:tenergy} we see that
\begin{align}
\label{confinepote}
\frac{d}{dt} \cF (\rho) &=  \frac1\e\int S  : \nabla_x u \, dx  + \frac1\e\int \rho \nabla_x V \cdot u  \, dx \, ,
\\
\label{confinekine}
 \frac{1}{2}\frac{d}{dt} \int \rho |u|^2 dx &= -  \frac{1}{\e^2}\int \rho |u|^2 dx - \frac1\e\int    S  : \nabla_x u  \, dx - \frac1\e\int \rho \nabla_x V \cdot u  \, dx \, ,
\end{align}
and the total energy for \eqref{funsys_conf} reads
\begin{equation}\label{confinenergy}
\frac{d}{dt} \left ( \mathcal{F}(\rho) +  \frac{1}{2}\int \rho |u|^2 dx  \right )+\frac{1}{\e^2}\int \rho |u|^2dx =0\, .
\end{equation}

On the other hand, due to the formulas
\begin{align*}
\cF (\rho | \brho) &= \cE (\rho | \brho ) \, ,
\\
\frac{\delta \cF}{\delta \rho} (\rho) - \frac{\delta \cF}{\delta \rho} (\brho) &= \frac{\delta \cE}{\delta \rho} (\rho) - \frac{\delta \cE}{\delta \rho} (\brho)\, ,
\end{align*}
the calculations in Sections \ref{sec-funcrel-pote} and \ref{sec-funcrel-kine} remain essentially unaffected, and the final relative energy formula,
comparing two solutions $(\rho, u)$ and $(\brho, \baru)$ of the system \eqref{funsys_conf} with confinement potential,
takes exactly the same form as \eqref{reltote}:
\begin{equation}
\label{reltoteconfine}
\begin{aligned}
&\frac{d}{dt} \left ( \cE (\rho | \brho) + \frac12 \int   \rho |u - \baru|^2 \, dx \right ) +  \frac{1}{\e^2}   \int  \rho |u - \baru|^2 \, dx
\\
&\qquad \qquad =  \frac{1}{\e} \int  \nabla_x \baru : S (\rho | \brho )   \, dx  -  \frac{1}{\e} \int  \rho \nabla_x \baru : (u - \baru) \otimes (u - \bar u) \, dx \, .
\end{aligned}
\end{equation}

 \subsection{The analysis of the diffusive limit}\label{subset:1hyp1gf}
 We return now to the system \eqref{funsys_scaled} without confinement potential.
 In this section  we aim to compare a solution $(\rho, \rho u)$ of \eqref{eq:frictapprox} with a smooth 
 solution 
 $\bar\rho$ of \eqref{eq:gf}. To this end, we define 
 \begin{equation}
 \label{defbarm}
 \bar m = \bar\rho \bar u = - \e \bar \rho \nabla_x\frac{\delta \mathcal{E}( \bar \rho)}{\delta \rho}
 \end{equation} 
and visualize the pair 
 $(\bar \rho, \bar m = \bar \rho\bar u)$ as an approximate solution of  \eqref{eq:frictapprox}, that is 
\begin{equation}\label{eq:gfrewri}
 \begin{cases}
	\displaystyle{\bar\rho_{t} +\frac1\e \dx \bar m =0}&   \\
	& \\
       \displaystyle{ \bar m_{t} + \frac1\e \dx \frac{\bar m\otimes \bar m}{\bar \rho} 
       	= -\frac{1}{\e^{2}}  \bar m - \frac1\e \bar  \rho\nabla_x \frac{\delta \mathcal{E}(\bar \rho)}{\delta \rho } + \bar e}\, ,  &
    \end{cases}
\end{equation}
where 
 \begin{equation*}
\bar e = \bar m_{t} + \frac1\e \dx \frac{\bar m\otimes \bar m}{\bar \rho}\,  .
\end{equation*}
Using \eqref{defbarm} and the smoothness of $\bar \rho$ we see that $\bar e$ is a $O(\e)$ error term.
The only difference from the calculations  of section \ref{subset:twohypsol} lies in the relative kinetic energy \eqref{relkine}. Presently,
$\bar u$ satisfies the approximate equation
 \begin{equation*}
 \bar u_t  +\frac1\e(\bar u\cdot\nabla_x)\bar u =-\frac{1}{\e^2}\bar u -   \frac1\e  \nabla_x \frac{\delta \mathcal{E}(\bar \rho)}{\delta \rho } +  \frac{\bar e}{\bar \rho}\, ,
 \end{equation*}
 and, following the analysis in section \ref{sec-funcrel-kine}, we obtain
 \begin{align} 
 \label{eq:relkinenrelax}
& \frac{1}{2} \partial_t   |u-\bar u|^2+ \frac{1}{2\e}(u\cdot\nabla_x)  |u- \bar u|^2  +
\frac1\e \nabla_x \baru : (u - \baru) \otimes (u - \bar u) 
 \nonumber\\
 &\quad = 
- \frac{1}{2\e^2}|u - \bar u|^2   - \frac1\e (u - \bar u)\cdot   \nabla_x \left (\frac{\delta \mathcal{E}(  \rho)}{\delta \rho}
 - \frac{\delta \mathcal{E}(\bar \rho)}{\delta \rho }\right) -  (u-\bar u)\cdot \frac{\bar e}{\bar \rho}\, ,
\end{align}
 and the relative energy relation 
 \begin{align}
\label{eq:relengfrelax}
&\frac{d}{dt} \left ( \cE (\rho | \brho) + \frac12 \int  \rho |u - \baru|^2 \, dx \right ) + \frac{1}{\e^2}  \int  \rho |u - \baru|^2 \, dx
\nonumber\\
&\quad  = \frac{1}{\e} \int  \nabla_x \baru : S (\rho | \brho )   \, dx  -  \frac{1}{\e} \int   \rho \nabla_x \baru : (u - \baru) \otimes (u - \bar u) \, dx
-  \int  \rho(u-\bar u)\cdot \frac{\bar e}{\bar \rho} dx  \, .
\end{align}
Since $\bar u = O(\e)$,  for smooth solutions the coefficients of the quadratic terms are $O(1)$ in $\e$. 
Moreover, the last (error) term at the right hand side of \eqref{eq:relengfrelax} is controlled in terms of  the distance w.r.t.\ the equilibrium, 
i.e.\ by $\tfrac12 \e^{-2}\rho |u - \bar u|^2$, and an $O(\e^4)$ term depending on total mass of $\rho$ and the smooth, strictly positive solution $\bar\rho$ of \eqref{eq:gf}.
This relation is therefore instrumental to control the relaxation limit.

\subsection{Relative energy estimate for the gradient flow}\label{subset:limit only}
 The above calculations induce a relative energy estimate for comparing two solutions $\rho$ and $\bar \rho$ of the limiting gradient flow \eqref{eq:gf}. 
 Indeed, using  in \eqref{reltote} the expressions \eqref{defbarm} for the two velocities  $u$ and $\bar u$ at equilibrium  we obtain
 \begin{equation}
 \label{relpotegrad}
\frac{d}{dt} \cE (\rho | \brho) + \int  \rho \left | \nabla_x \Big (\frac{\delta \mathcal{E}( \rho)}{\delta \rho}-\frac{\delta \mathcal{E}(\bar \rho)}{\delta \rho} \Big ) \right |^2  dx
 = - \int S_{i j} (\rho | \brho )  \frac{\del^2 }{\del x_j \del x_i}  \frac{\delta \mathcal{E}( \bar \rho)}{\delta \rho}   dx
  \, .
\end{equation}
Note that in this calculation the effect of the kinetic energy drops out,  and the  derivation of  \eqref{relpotegrad} 
uses only  \eqref{relpote} and the transport equations for $\rho$ and $\bar \rho$;  the term $D$ becomes dissipative 
with the velociy choices 
$$u = - \e \nabla_x \frac{\delta \mathcal{E}( \rho)}{\delta \rho} \quad \text{and} \quad \bar u = - \e \nabla_x\frac{\delta \mathcal{E}( \bar \rho)}{\delta \rho}
$$
leading to the gradient flow.

\section{From the Euler-Poisson system with friction to the Keller-Segel system}\label{sec:KS}
In this section, we shall make precise the functional setting  for the Euler--Poisson system with an attractive potential and friction converging 
to Keller--Segel type models. The Euler-Poisson system reads
\begin{equation}
    \begin{cases}
	\displaystyle{\rho_{t} +\frac{1}{\e}\dx m =0}&   \\
	& \\[-5pt]
       \displaystyle{ m_{t} + \frac{1}{\e}\dx \frac{m\otimes m}{\rho} 
       + \frac{1}{\e}\nabla_{x}p(\rho) 
	= -\frac{1}{\e^{2}}m} + \frac{\X}{\e}\rho \nabla_{x} c &  \\
	& \\[-5pt]
	-\triangle_{x} c + \beta c=  \rho - < \rho >,
    \end{cases}
    \label{eq:KS}
\end{equation}
where $t\in\R$, $x\in\T^{n}$, $n=2,3$ the physically relevant dimensions, $\rho\geq 0$, $c\in\R$, 
$m = \rho u \in\R^{n}$. We assume that the internal energy $h(\rho)$ and the pressure $p(\rho)$ are connected through the 
usual thermodynamic relation
\begin{equation}
\label{hypthermo}
\rho h''(\rho) = p'(\rho) \, , \quad \rho h'(\rho) = p(\rho) + h(\rho) \, \quad \mbox{with $p'(\rho) > 0$}.
\tag{H}
\end{equation}
Moreover, we impose the conditions on the pressure that for some constants $k > 0$ and $A > 0$,
\begin{align}
\label{eq:growthh}
h(\rho) &= \frac{k}{\gamma -1} \rho^\gamma + o(\rho^\gamma) \, , \quad \hbox{as}\ \rho\to +\infty
\tag{A$_1$}
\\
\label{hyppress}
| p'' (\rho ) | &\le A \frac{p'(\rho)}{\rho} \qquad \forall \rho > 0 \, .
\tag{A$_2$}
\end{align}
These conditions are satisfied by the usual $\gamma$--law:  $p(\rho) = k\rho^{\gamma}$ with $\gamma > 1$.
For the constants $\gamma > 1$, $\beta\geq 0$ and $\X>0$ (chemosensitive coefficient)  appearing in  \eqref{eq:KS} we will impose
various smallness/largeness conditions that are precised later.
Finally, the elliptic equation in \eqref{eq:KS} is provided with periodic boundary conditions and it shall be intended for zero mean solutions $c$ in the case $\beta=0$; in that equation, as specified in the previous section, $< \rho >$ stand for the mean of $\rho$.


Formally,  after an appropriate scaling of the moment, at the limit $\e\downarrow 0$ we obtain $m =  \X\rho \nabla_{x}c - \nabla_{x} p(\rho) $ and therefore the formal limit of \eqref{eq:KS} is given by the Keller-Segel type model:
\begin{equation}
    \begin{cases}
	\displaystyle{\rho_{t} +\dx \big (\X\rho \nabla_{x}c - 
	\nabla_{x} p(\rho) \big )  =0}&   \\
	- \triangle_{x} c + \beta c = \rho - <\rho>.
    \end{cases}
    \label{eq:KSlimit}
\end{equation}

\subsection{Preliminaries} \label{sec:prelim}
Standard hyperbolic theory  suggests to employ for  \eqref{eq:KS}
the usual entropy--entropy flux pair 
\begin{equation}
   \eta(\rho,m) =  \frac{1}{2}\frac{|m|^{2}}{\rho} + h(\rho),\quad
  \, , \quad
   q(\rho,m) = \frac{1}{2}m\frac{|m|^{2}}{\rho^{2}} + 
    mh'(\rho) \, , 
\label{defentropypair}
\end{equation}
depicting the mechanical energy and its flux.
We recall that for the particular case of $\gamma$--law gases, $p(\rho) = k \rho^\gamma$, $h$ takes the form
\begin{equation*}
    h(\rho) = 
    \begin{cases}
    \displaystyle{\frac{k}{\gamma-1}\rho^{\gamma} = \frac{1}{\gamma-1}p(\rho)} & \hbox{for}\ \gamma>1; \\
   \displaystyle{ k\rho\log\rho  } & \hbox{for}\ \gamma=1.
    \end{cases}
\end{equation*}
An entropy weak solution of \eqref{eq:KS} satisfies in the sense of distribution the entropy inequality 
\begin{equation}
    \eta(\rho,m)_{t} +\frac{1}{\e}\dx q(\rho, m) \leq  
    -\frac{1}{\e^{2}}
    \nabla_{m}\eta(\rho,m)\cdot m = 
    - \frac{1}{\e^{2}}\frac{|m|^{2}}{\rho} + \frac{\X}{\e}  m \cdot \nabla_{x} c
    \label{eq:entrKS}
\end{equation}
On the other hand, smooth solutions of \eqref{eq:KSlimit} satisfy the entropy identity
\begin{equation}\label{eq:entrKSlimit}
    h(\rho)_{t} + \dx \big ( h'(\rho)(\X\rho\nabla_{x} c - 
    \nabla_{x}p(\rho) )\big)
    = - 
    \frac{| \nabla_{x} p(\rho)|^{2}}{\rho} + \X \nabla_{x}p(\rho)\cdot  
    \nabla_{x} c.
\end{equation}
This form of the energy equations is inadequate to carry out the relative entropy analysis of the forthcoming section.

Next, we present a variant of the energy equation,  inspired by the formal analysis of Section \ref{sec:largefrictvsgradflows}.
To start, the solution of the elliptic equation \eqref{eq:KS}$_3$ can be expressed via convolution with the Green's function,
\begin{equation*}
c (x) = (\mathcal{K} \ast \rho) (x) = \int \mathcal{K}( x - y) \rho (y) \, dy\, ,
\end{equation*}
where $\mathcal{K}$ is a symmetric function. The energy of the Euler-Poisson system takes the form
\begin{equation}
\label{epenergy}
\begin{aligned}
\mathcal{E}(\rho) &= \int \left ( h(\rho) - \tfrac{1}{2}\X\rho c\right )dx  
\\
&= \int h(\rho) \, dx - \tfrac{1}{2} \X \iint \rho (x) \mathcal{K} (x - y) \rho (y) dx dy
\, ,
\end{aligned}
\end{equation}
and the symmetry of $\mathcal{K}$  implies 
\begin{equation}
- \rho \nabla_x \frac{\delta \cE}{\delta \rho }(\rho) = - \rho \nabla_x  ( h'(\rho) - \X c)  \, .
\label{hamflowep}
\end{equation}

Next, we show that for the Euler-Poisson system there is a stress associated with \eqref{hamflowep}. Indeed,
multiplying    \eqref{eq:KS}$_3$ by $\nabla_x c$ we end up with 
\begin{equation*}
\rho \nabla_x c = \nabla_x \Big ( \tfrac{1}{2} |\nabla_x c|^2 + \tfrac{\beta}{2} c^2 + <\rho> c \Big ) - \dx ( \nabla_x c \otimes \nabla_x c )\, ,
\end{equation*}
so that
\begin{align}
&- \rho \nabla_x  ( h'(\rho) - \X c) 
\\
&\quad = \dx \Big ( - \big [ p(\rho) -  \tfrac{1}{2}\X ( {\beta} c^2 + |\nabla_x c|^2) - \X <\rho> c \big ] \mathbb{I} - \X \nabla_x c \otimes \nabla_x c \Big ) \, .
\nonumber
\end{align}
This determines the stress $S$ in \eqref{stresshyp} as
\begin{align}
S &=  - \big [ p(\rho) -  \tfrac{1}{2}\X ( {\beta} c^2 + |\nabla_x c|^2) - \X <\rho> c \big ] \mathbb{I} - \X \nabla_x c \otimes \nabla_x c \, ,
\label{stressep}
\end{align}
where $\I$ is the identity matrix. Note that the pressure has a contribution coming from the mean-field interaction term.
The induction of a pressure from the mean filed interaction can also be see from the energy identity
\begin{equation}
\label{energyh1}
    \int  \rho c dx   = \int   \big (  \beta c^2+ |\nabla_x c|^2\big)dx \, ,
\end{equation}
obtained directly from the elliptic equation  \eqref{eq:KS}$_3$.

Following the general framework of Section \ref{subset:tenergy},  the potential energy satisfies
\begin{align*}
\frac{d}{dt}   \int  \left ( h(\rho) - \tfrac{1}{2}\X\rho c\right )  dx  
&=
- \frac1\e  \int \big (h'(\rho) - \X c \big)   \dx (\rho u) dx \nonumber\\
&=
   \frac1\e\int S  : \nabla_x u \, dx\, ,
\end{align*}
the kinetic energy is 
\begin{align*}
 \frac{1}{2}\frac{d}{dt} \int \frac{|m|^2}{\rho} dx &= -   \frac{1}{\e^2}\int \frac{|m|^2}{\rho} dx 
 -  \frac1\e\int  \rho u \cdot \nabla_x ( h'(\rho) - \X c) dx
 \\
 &  = -  \frac{1}{\e^2}\int \frac{|m|^2}{\rho} dx - \frac1\e\int    S  : \nabla_x u \, dx\, ,
\end{align*}
and the total energy reads 
\begin{equation*}
    \frac{d}{dt} \int   \left (\eta(\rho,m) - \tfrac{1}{2}\X \rho c \right) dx   +  \frac{1}{\e^{2}}\int\frac{|m|^{2}}{\rho} dx = 0\, .
\end{equation*}

In accordance to the usual practice in conservation laws, we will define entropy dissipative solutions $(\rho, m, c)$ of \eqref{eq:KS} as weak solutions
satifysing the weak form of the energy inequality
\begin{equation}
    \frac{d}{dt} \int   \left (\eta(\rho,m) - \tfrac{1}{2}\X \rho c \right) dx   +  \frac{1}{\e^{2}}\int\frac{|m|^{2}}{\rho} dx \le  0\, ,
      \label{eq:entrKS-3}
\end{equation}
in the sense of distributions.
Clearly \eqref{eq:entrKS-3} and \eqref{eq:entrKS} are equivalent (for smooth solutions), however the form \eqref{eq:entrKS-3} together with \eqref{energyh1}
suggest that to control the potential energy is tantamount to obtaining control of the  $H^1$ norm of $c$.
The above estimate is the starting point to obtain the stability estimate in terms of relative entropy and the corresponding analysis of the relaxation limit in
the next section. 

\subsection{Relative energy estimate}\label{subsec:relenKS}
In this section we perform a relative energy computation between a weak solution $(\rho, m, c)$  of \eqref{eq:KS} and a strong solution $(\bar \rho, \bar c)$ 
of  \eqref{eq:KSlimit}.
The final formula  \eqref{eq:RelEnKorGenFinalweak} turns out to be a special case of formula \eqref{eq:relengfrelax} derived in Section \ref{sec:largefrictvsgradflows} 
for smooth solutions,
and of the relative energy computation for the Euler--Poisson system  discussed in \cite[Section~2.5]{GLT15}. 
Nevertheless, we shall provide here a direct proof of this identity. The reason is twofold: (i) to justify the relative energy estimate among
a weak and a strong solution, (ii) to account for the effect of error terms appropriate for the relaxation limit problem.
We recall the framework of weak solutions we shall refer to.
\begin{definition}  \label{def:wksol}
A function $( \rho,  m, c)$ with  $\rho \in C([0, \infty) ; L^1 ( \T^n )\cap  L^\gamma ( \T^n ))$, $m \in C  \big ( ( [0, \infty) ;  \big ( L^1(\T^n) \big )^n \big )$, $c\in C([0, \infty) ; H^1 ( \T^n ) )$
$\rho \ge 0$ and  $\frac{m \otimes m}{\rho} \in L^1_{loc}  \left ( ( (0, \infty) \times \T^n ) \right )^{n \times n}$is a \emph{dissipative weak periodic
solution} of \eqref{eq:KS} with finite total energy if 
\begin{itemize}
\item $( \rho,  m, c)$ satisfies the weak form of \eqref{eq:KS}; 
\item $( \rho,  m, c)$ satisfies the following integrated form of the energy inequality \eqref{eq:entrKS-3}:
\begin{equation}
 \label{eq:energyintegrated}
 \begin{aligned}
  - \iint  \Big (  \eta(\rho,m) -  \tfrac{1}{2}\X \rho c \Big ) \dot\theta(t) dxdt + \frac{1}{\e^{2}}\iint \frac{|m|^{2}}{\rho}\theta(t)  dxdt 
  \\
 \le   \int \Big (  \eta(\rho,m) -  \tfrac{1}{2}\X \rho c  \Big ) \Big |_{t=0}  \theta(0)dx \, ,
 \\
       \text{for any non-negative}\ \theta  \in W^{1, \infty} [0, \infty) \ \mbox{compactly supported on $[0, \infty)$},
 \end{aligned}
\end{equation}
\item $( \rho,  m)$ satisfies the following bounds, natural within the given framework:
\begin{align}
\sup_{t\in (0,T)} \int   \rho  dx &=M < \infty\, ,
\nonumber
\\
 \sup_{t\in (0,T)} \int 
 \left (  \eta(\rho,m) -  \tfrac{1}{2}\X \rho c \right ) dx
 & < \infty \, .
 \label{hypCauchyK2}
\end{align}
\end{itemize}
\end{definition}
\begin{remark}\label{rem:creg}
The regularity requested in the above definition is the one needed to  rewrite the equation in terms of the divergence of the stress tensor $S$  in \eqref{stressep}, and it is implied by the finite energy condition. This relies on the $L^\gamma$ integrability of $\rho$ and  elliptic regularity estimates for $c$, and it is proved in Section \ref{sec:stability}; 
see Lemma \ref{lem:ellipticreg}. 
Besides the appropriate smallness condition on the  chemosensitive coefficient $\X>0$
(cfr. \, \eqref{eq:convexitylargecond}), we shall require that $\gamma$ lies in the relevant range for which  \eqref{eq:KSlimit} has regular solutions, that is \eqref{eq:gammacond}.

The existence theory for the Euler-Poisson system \eqref{eq:KS} with attractive potential $\X>0$  (treated here) is largely an open problem. 
Repulsive potentials, $\X<0$, offer a subtle stabilizing mechanism leading to global smooth solutions for potential flows with small velocities \cite{GP11}, 
as well as  existence results for global weak entropy solutions in one-space dimension ({\it e.g.}   \cite{HLY09}). 

\end{remark}

Let   $(\bar \rho, \, \bar c)  : (0,T)\times \T^n \to \R^{n+1}$ be a strong  (conservative) periodic solution 
of \eqref{eq:KSlimit} with $\bar\rho\geq \delta > 0$ for some $\delta > 0$, where 
 the regularity ``strong''  refers to the boundedness of all  the derivatives which will appear lately in the relative energy relation.

As in \cite{LT12a,LT12b} and in Section \ref{subset:1hyp1gf}, we rewrite the equilibrium system \eqref{eq:KSlimit} 
in the variables $\bar\rho$, $\bar c$ and
 \begin{equation}
 \label{defbarmep}
\bar m = - \e \big (\nabla_{x} p(\bar \rho) -  \X\bar\rho \nabla_{x}\bar c \big ) = - \e \bar\rho \nabla_{x} \big ( h'(\bar \rho) - \X \bar c \big )
\end{equation}
as follows:
\begin{equation}
    \begin{cases}
	\displaystyle{\bar\rho_{t} +\frac{1}{\e}\dx \bar m =0}&   \\[6pt]
       \displaystyle{ \bar m_{t} + \frac{1}{\e}\dx \frac{\bar m\otimes \bar m}{\bar\rho} 
       + \frac{1}{\e}\nabla_{x}p(\bar\rho) 
	= -\frac{1}{\e^{2}}\bar m} + \frac{\X}{\e}\bar\rho \nabla_{x} \bar c + e(\bar \rho, \bar m)& \\[6pt]
	-\triangle_{x} \bar c + \beta \bar c=  \bar \rho - <\bar\rho>\, ,
    \end{cases}
    \label{eq:KSbar}
\end{equation}
where the term $e(\bar \rho, \bar m)$ is given by
\begin{align}
   \bar e :=  e(\bar \rho, \bar m) &= \frac{1}{\e} \dx \left (\frac{\bar 
       m\otimes \bar m}{ \bar\rho} \right ) +  \bar m_t  \nonumber\\
       &= 
       \e  \dx \Big (\bar\rho
       \nabla_{x} \big ( h'(\bar \rho) - \X \bar c \big )\otimes \nabla_{x} \big ( h'(\bar \rho) - \X \bar c \big )\Big ) 
       -\e  \del_t \big ( \bar\rho \nabla_{x} \big ( h'(\bar \rho) - \X \bar c \big ) \big ) 
       \nonumber\\
       &= O(\e)\, .
       \label{eq:errortermdef}
\end{align}
Then, the equations satisfied by the differences $\rho-\bar\rho$, $m -\bar m$, $c - \bar c$ are given by
\begin{equation}
\label{eq:diff}
\begin{cases}
\displaystyle{( \rho - \bar \rho)_{t} +\frac{1}{\e}  \pxi (m_i - \bar m_i ) = 0 }&
\\[8pt]
\begin{aligned}
( m - \bar m)_{t} &+ \frac{1}{\e} \pxi( f_i (\rho, m) -  f_{i}(\bar \rho, \bar m) )
\\[-4pt]
	&= -\frac{1}{\e^{2}} ( m - \bar m ) + \frac{\X}{\e}\big( \rho \nabla_{x}  c- \bar\rho \nabla_{x} \bar c\big)  - \bar e
\end{aligned}
	& \\[5pt]
	\displaystyle{ -\triangle_{x} (c- \bar c) + \beta(c- \bar c)=  (\rho - \bar \rho)     - < \rho - \bar \rho >      } \, , &
\end{cases}
\end{equation}
where $i=1,\dots,n$, $f_i$ stands for the (vector) of the flux in \eqref{eq:KS},
\begin{equation}
    f_{i}( \rho,  m) =  m_{i}\frac{m}{\rho} + p(\rho) \mathbb{I}_{i}
\label{eq:fluxEuler}
\end{equation}
and $\mathbb{I}_{i}$ is the $i$--th column of the  identity 
matrix.
With this notations, the integrated version of the entropy relation at the limit \eqref{eq:entrKSlimit} becomes
\begin{equation}
     \frac{d}{dt} \int_{\T^n}   \Big (\eta(\bar\rho,\bar m) -  \tfrac{1}{2}\X \bar\rho \bar c \Big )dx  +  \frac{1}{\e^{2}}\int_{\T^n}\frac{|\bar m|^{2}}{\bar\rho} dx   = 
   \int_{\T^n} \bar e\cdot \frac{\bar m}{\rho} dx \, .
      \label{eq:entrKSLimitrewr}
\end{equation}

\begin{theorem}\label{prop:relenKS}
Let   $(\rho,m,c)$ be as in Definition \ref{def:wksol} and let $(\bar\rho,\bar c)$ be a smooth solution  of \eqref{eq:KSlimit}.
Then
\begin{align}
\label{eq:RelEnKorGenFinalweak}
 & \int_{\T^n} \left. \Big [ \eta(\rho, m \left | \bar \rho, \bar m \right.  ) 
- \tfrac{1}{2}\X(\rho - \bar \rho)(c - \bar c)    \Big ]dx \right |_t 
\nonumber\\
&\ \leq
\int_{\T^n}  \left. \Big [ \eta(\rho, m \left | \bar \rho, \bar m \right.  ) 
- \tfrac{1}{2}\X(\rho - \bar \rho)(c - \bar c)    \Big ]dx \right |_0
\nonumber\\
&\ \  -\frac{1}{\e^2}  \iint_{[0,t]\times\T^n}  \rho \left |\frac{m}{\rho} - \frac{\bar m}{\bar \rho}  \right |^2 dxd\tau  - \iint_{[0,t]\times\T^n} e(\bar \rho, \bar m)\cdot \frac{\rho}{\bar\rho}\left ( \frac{m}{\rho} - \frac{\bar m}{\bar \rho} \right )  dxd\tau
\nonumber\\
&\ \ -  \frac{1}{\e}\iint_{[0,t]\times\T^n} \dx \big (\frac{\bar m}{\bar\rho} \big ) 
 p(\rho \left  | \bar \rho \right .) dxd\tau 
\nonumber\\
&\ \ + \frac{\X}{\e}\iint_{[0,t]\times\T^n} \dx \big (\frac{\bar m}{\bar\rho} \big ) 
 \big ( \tfrac{\beta}{2}  (c - \bar c)^2 +  \tfrac{1}{2} |\nabla_x (c-\bar c)|^2 + (c - \bar c) < \rho - \bar \rho >  \big)  dxd\tau 
\nonumber \\
& \ \  - \frac{\X}{\e}\iint_{[0,t]\times\T^n} \nabla_x  \big (\frac{\bar m}{\bar\rho} \big ) :  \nabla_x (c - \bar c) \otimes \nabla_x (c - \bar c) dxd\tau
\nonumber\\
&\ \ - \frac{1}{\e} \iint_{[0,t]\times\T^n} \rho \nabla_x  \left (\frac{\bar m}{\bar\rho} \right ) : \left (\frac{m}{\rho} - \frac{\bar m}{\bar\rho} \right ) \otimes \left (\frac{m}{\rho} - \frac{\bar m}{\bar\rho} \right )dxd\tau\, ,
\end{align}
where 
\begin{equation}
  \eta(\rho, m \left | \bar \rho, \bar m \right. )  =  h(\rho | \bar \rho)  + \tfrac{1}{2} \rho \big | \frac{m}{\rho} - \frac{\bar m}{\bar \rho} \big |^2  \, .
 \end{equation}
\end{theorem}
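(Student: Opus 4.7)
The plan is to apply the functional relative-energy framework of Section \ref{subset:1hyp1gf} with the Euler--Poisson energy $\cE(\rho)=\int h(\rho)\,dx-\tfrac{1}{2}\X\int\rho c\,dx$, adapted to the present weak/strong setting. Because $c$ depends linearly on $\rho$ through a symmetric Green's function, the relative potential energy takes the local form $\int h(\rho|\brho)\,dx-\tfrac{1}{2}\X\int(\rho-\brho)(c-\bar c)\,dx$, which is precisely the quantity on the left-hand side of \eqref{eq:RelEnKorGenFinalweak}. The relative stress associated to \eqref{stressep} works out to
\[
S(\rho|\brho)=-p(\rho|\brho)\I+\tfrac{\X}{2}\big(\beta(c-\bar c)^2+|\nabla_x(c-\bar c)|^2\big)\I+\X\langle\rho-\brho\rangle(c-\bar c)\I-\X\nabla_x(c-\bar c)\otimes\nabla_x(c-\bar c),
\]
whose contraction against $\nabla_x\baru$ reproduces exactly the pressure and $\X$-dependent terms on the right-hand side of \eqref{eq:RelEnKorGenFinalweak}.

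Concretely, I would assemble, integrated in time against a nonnegative compactly supported $\theta(t)$ as in \eqref{eq:energyintegrated}, three pieces: (i) the weak energy inequality \eqref{eq:energyintegrated} for $(\rho,m,c)$; (ii) the identity \eqref{eq:entrKSLimitrewr} for the reformulated smooth solution \eqref{eq:KSbar}, which contributes the error $\int\bar e\cdot\baru$; and (iii) a cross-term identity produced by using $-\tfrac{1}{2}|\baru|^2+h'(\brho)-\X\bar c$ as a test function in the weak form of $(\rho-\brho)_t+\tfrac{1}{\e}\dx(m-\barm)=0$ and $\baru$ as a test function in the momentum difference equation of \eqref{eq:diff}. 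Smoothness of $(\brho,\bar c,\baru)$ allows the time derivatives falling on the test functions to be handled classically. The friction $-\tfrac{1}{\e^2}(m-\barm)$ then combines with the dissipations of (i)--(ii) to produce precisely $-\tfrac{1}{\e^2}\int\rho|u-\baru|^2$; the convective flux $\nabla_x\baru:(f(\rho,m)-f(\brho,\barm))$ splits, after removing its linearization, into the relative pressure $p(\rho|\brho)\dx\baru$ and the purely quadratic term $\rho\nabla_x\baru:(u-\baru)\otimes(u-\baru)$; and the error $-\bar e$ contributes the $-\int\bar e\cdot\rho(u-\baru)/\brho$ term appearing in \eqref{eq:RelEnKorGenFinalweak}.

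The hard part is repackaging the nonlocal electrostatic cross terms, namely the $\tfrac{\X}{\e}(\rho\nabla_xc-\brho\nabla_x\bar c)\cdot\baru$ coming from the momentum test and the $-\X\bar c(\rho-\brho)$ coming from the mass test, into the local quantity $\nabla_x\baru:S(\rho|\brho)$ displayed above. This requires the symmetry relation $\int f(\mathcal{K}\ast g)\,dx=\int g(\mathcal{K}\ast f)\,dx$ to symmetrize the mean-field contributions, combined with the Poisson equation $-\triangle_x(c-\bar c)+\beta(c-\bar c)=(\rho-\brho)-\langle\rho-\brho\rangle$ and repeated integrations by parts, which convert $(\rho-\brho)\nabla_x\bar c$-type expressions into the local blocks $|\nabla_x(c-\bar c)|^2$, $(c-\bar c)^2$ and $\nabla_x(c-\bar c)\otimes\nabla_x(c-\bar c)$ that make up $S(\rho|\brho)$. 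The $H^1$ regularity of $c$ stemming from Remark \ref{rem:creg} together with the finite-energy bounds \eqref{hypCauchyK2} ensure that every quadratic integral is meaningful. Finally, the inequality (rather than equality) in \eqref{eq:RelEnKorGenFinalweak} is inherited entirely from the weak inequality in step (i); steps (ii) and (iii) are exact identities.
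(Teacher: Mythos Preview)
Your proposal is correct and follows essentially the same route as the paper: the same three-piece assembly (weak energy inequality for $(\rho,m,c)$, exact identity \eqref{eq:entrKSLimitrewr} for $(\brho,\barm)$, and the weak mass/momentum difference equations tested against $h'(\brho)-\X\bar c-\tfrac12|\baru|^2$ and $\baru$), the same use of the symmetry of $\mathcal{K}$ to turn $(\rho-\brho)\partial_\tau\bar c$ into a term of the form $\nabla_x(c-\bar c)\cdot\baru(\rho-\brho)$, and the same final integration by parts via the Poisson equation for $c-\bar c$. Your explicit identification of $S(\rho|\brho)$ is a slightly cleaner way to package the electrostatic terms than the paper's direct computation, but the substance is identical.
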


\begin{proof}
Let $(\rho, m,c)$ be a weak solution according to Definition \ref{def:wksol} and $(\brho , \bar m, \bar c)$ a strong 
 solution of \eqref{eq:KSbar}. 
 We introduce in \eqref{eq:energyintegrated} the standard choice of test function
\begin{equation}
\theta(\tau) := 
\begin{cases}
1, &\hbox{for}\ 0\leq \tau < t, \\
\frac{t-\tau}{\mu} + 1, &\hbox{for}\ t\leq \tau < t+\mu , \\
0, &\hbox{for}\ \tau \geq t+\mu ,
\end{cases}
\label{testthetaS}
\end{equation}
and let  $\mu \downarrow 0$; we then obtain 
\begin{equation}
\label{lem:ret1}
  \left. \int_{\mathbb{T}^n}\big (  \eta(\rho,m)  -  \tfrac{1}{2}\X \rho c \big)dx  \right |^{t}_{\tau=0}
  \leq
- \frac{1}{\e^2}\iint_{[0,t]\times\T^n} \frac{|m|^2}{\rho}dx d\tau \, .
\end{equation}
Moreover,  time integration of \eqref{eq:entrKSLimitrewr} gives
\begin{equation}\label{lem:ret}
\begin{aligned}
&\left.  \int_{\T^n}   \Big (\eta(\bar\rho,\bar m) -  \tfrac{1}{2}\X \bar\rho \bar c \Big )dx \right |^{t}_{\tau=0}  
\\
&\quad = -  \frac{1}{\e^{2}}\iint_{[0,t]\times\T^n}\frac{|\bar m|^{2}}{\bar\rho} dx d\tau  + 
   \iint_{[0,t]\times\T^n} \frac{\bar m}{\bar\rho}\cdot \bar e  dx d\tau \, .
\end{aligned}
\end{equation}

Next, for the linear part of the relative energy, we consider the  weak formulation 
for the equations satisfied by the differences $(\rho - \bar\rho, m- \bar m)$ in \eqref{eq:diff}:
\begin{align}
&- \iint_{[0,+\infty)\times\T^n} {\psi}_t (\rho - \bar \rho) +  \frac{1}{\e}\psi_{x_i} (m_i - \bar m_i)   dx dt
- \int_{\mathbb{T}^n} \psi (\rho - \bar \rho) \Big |_{t=0} \, dx = 0\, ,
\label{weakmass}
\\
&- \iint_{[0,+\infty)\times\T^n} \left (\varphi_t \cdot (m - \bar m) + \frac{1}{\e}  \partial_{x_i}\varphi_j  \left (\frac{m_i m_j}{\rho} - \frac{\bar m_i \bar m_j}{\bar\rho} \right)\right.
\nonumber
\\
&\quad  \left.+ \frac{1}{\e} \dx\varphi \big( p(\rho) - p(\bar\rho) \big)  \right ) dx dt
 - \int_{\mathbb{T}^n} \varphi \cdot (m - \bar m) \Big |_{t=0}  dx 
\nonumber\\
&\ =  -\frac{1}{\e^2}\iint_{[0,+\infty)\times\T^n} \varphi \cdot ( m - \bar m )dxdt    
\nonumber\\
&\quad + \frac{1}{\e}\iint_{[0,+\infty)\times\T^n} \X\varphi \cdot\big( \rho \nabla_{x}  c- \bar\rho \nabla_{x} \bar c\big)dxdt
-   \iint_{[0,+\infty)\times\T^n} \varphi\cdot \bar e dxdt\, ,
\label{weakmomentum}
\end{align}
where $\varphi$, $\psi$ are Lipschitz test functions 
compactly supported in $ [0,+\infty)$ in time  and periodic in space.
In the above relations we introduce the test functions
$$
\psi = \theta(\tau)\left(h'(\bar \rho)  - \X\bar c - \frac{1}{2}\frac{|\bar m|^2}{\bar\rho^2}   \right ) , \quad
\varphi = \theta(\tau) \frac{\bar m}{\bar\rho} \,  ,
$$
with $\theta(\tau)$ as in \eqref{testthetaS}. This choice  in  \eqref{weakmass}, sending $\mu \downarrow 0$, leads to
\begin{align}
& \int_{\mathbb{T}^n}  \left (  h'(\brho)  -  \X\bar c - \frac{1}{2}\frac{|\bar m|^2}{\bar\rho^2}   \right ) (\rho - \bar \rho) 
 \Bigg |_{\tau=0}^t dx 
\nonumber\\
&\ - \iint_{[0,t]\times\T^n} \partial_\tau\left(  h'(\brho)  - \X\bar c - \frac{1}{2}\frac{|\bar m|^2}{\bar\rho^2}   \right ) (\rho - \bar \rho) 
dx d\tau 
\nonumber\\
&\ - \frac{1}{\e} \iint_{[0,t]\times\T^n} \nabla_x\left(  h'(\brho)  -  \X\bar c - \frac{1}{2}\frac{|\bar m|^2}{\bar\rho^2} \right ) \cdot (m- \bar m)   dx d\tau = 0\, .
\label{weakmass3}
\end{align}
Similarly, from \eqref{weakmomentum} we get
\begin{align}
& \int_{\mathbb{T}^n} \frac{\bar m}{\bar\rho}\cdot (m - \bar m)  \Big |_{\tau=0}^tdx
- \iint_{[0,t]\times\T^n} \partial_\tau\left( \frac{\bar m}{\bar\rho}   \right ) \cdot (m-\bar m) dx d\tau 
\nonumber\\
&\ \ - \frac{1}{\e}
\iint_{[0,t]\times\T^n}\left  (  \partial_{x_i}\big( \frac{\bar m_j}{\bar\rho}   \big ) \Big ( \frac{m_im_j}{\rho}- \frac{\bar m_i\bar m_j}{\bar\rho}  \Big )
    + \dx \left( \frac{\bar m}{\bar\rho}   \right ) \big(p(\rho) - p(\bar\rho)\big)  \right )   dx d\tau 
  \nonumber\\ 
  &\ = -\frac{1}{\e^2}\iint_{[0,t]\times\T^n} \frac{\bar m}{\bar\rho} \cdot ( m - \bar m )dxd\tau  +
  \frac{1}{\e}\iint_{[0,t]\times\T^n} \X  \frac{\bar m}{\bar\rho}\cdot\big( \rho \nabla_{x}  c- \bar\rho \nabla_{x} \bar c\big)dxd\tau
   \nonumber\\ 
   &\ \
   -  \iint_{[0,t]\times\T^n} \frac{\bar m}{\bar\rho}\cdot \bar e dx d\tau\, .
\label{weakmomentum2}
\end{align}
Since $c  = \mathcal{K} \ast \rho $, $\bar c  = \mathcal{K} \ast \bar \rho$ and $\mathcal{K}$ is symmetric, 
\begin{equation*}
 \int_{\mathbb{T}^n} c \bar \rho dx = \int_{\mathbb{T}^n} \bar c  \rho dx
\end{equation*}
and therefore
\begin{equation*}
 \int_{\mathbb{T}^n} \Big ( - \tfrac12 \X  \rho c + \tfrac12 \X \bar\rho\bar c   + \X \bar c(\rho - \bar \rho)\Big ) dx = - \int_{\mathbb{T}^n}  \tfrac{1}{2}\X(\rho - \bar \rho)(c - \bar c) dx\, .
\end{equation*}
Hence, we compute \eqref{lem:ret1} -  \eqref{lem:ret} - \big(\eqref{weakmass3} + \eqref{weakmomentum2}\big) to obtain
\begin{align}
& \int_{\mathbb{T}^n} \Big [ \eta(\rho, m \left | \bar \rho, \bar m \right.  ) 
- \tfrac{1}{2}\X(\rho - \bar \rho)(c - \bar c)    \Big ]\Bigg |_{\tau=0}^t dx 
\nonumber\\
&\ \leq - \frac{1}{\e^2}\iint_{[0,t]\times\T^n} \Big [  \rho|u|^2  - \bar\rho|\bar u|^2   - \bar u \cdot  ( \rho u - \bar \rho\bar u ) \Big ]dx d\tau
\nonumber\\
&\ \ - \iint_{[0,t]\times\T^n} \Big [ \partial_\tau\big(  h'(\brho)  - \X\bar c - \tfrac{1}{2} |\bar u|^2  \big ) (\rho - \bar \rho)  
+ \partial_\tau(\bar u )\cdot (\rho u - \bar\rho \bar u)  \Big ] dxd\tau 
\nonumber\\
&\ \ - \frac{1}{\e} \iint_{[0,t]\times\T^n} \nabla_x\big(  h'(\brho)  -  \X\bar c - \tfrac{1}{2}|\bar u|^2 \big ) \cdot (\rho u- \bar \rho \bar u)   dx d\tau
\nonumber\\
&\ \ - \frac{1}{\e} \iint_{[0,t]\times\T^n} \partial_{x_i} ( \bar u_j )  \Big (  ( \rho u_i u_j -  \bar \rho \bar u_i \bar u_j   )
  +(p(\rho) - p(\bar\rho)) \delta_{ij}     \Big )   dx d\tau
  \nonumber\\ 
 &\ \ - \frac{1}{\e}\iint_{[0,t]\times\T^n} \X  \bar u \cdot\big( \rho \nabla_{x}  c- \bar\rho \nabla_{x} \bar c\big)dxd\tau \, ,
\label{eq:int1}
\end{align}
for $m = \rho u$ and $\bar m = - \e \bar\rho \nabla_{x} \big ( h'(\bar \rho) - \X \bar c \big )= \bar\rho\bar u$.
Since $\bar u$ verifies the equation
\begin{equation*}
\partial_t \bar u + \frac{1}{\e}( \bar u \cdot \nabla_x ) \bar u  = -  \frac{1}{\e^2} \bar u  - \frac{1}{\e}\nabla_x \big ( h'(\bar \rho) - \X \bar c    \big ) + \frac{\bar e}{\bar \rho}\, ,
\end{equation*}
then
\begin{align*}
&\partial_\tau \big (- \tfrac{1}{2} |\bar u|^2  \big )(\rho - \bar \rho) + \partial_\tau(\bar u )\cdot (\rho u - \bar\rho \bar u) + \frac{1}{\e} 
\nabla_x\big( - \tfrac{1}{2}|\bar u|^2 \big) \cdot (\rho u- \bar \rho \bar u)
\\
&\ +\frac{1}{\e}  \partial_{x_i} ( \bar u_j )     ( \rho u_i u_j -  \bar \rho \bar u_i \bar u_j   ) = -\frac{1}{\e^2}\rho \bar u\cdot(u-\bar u) - \frac{1}{\e}\rho \nabla_x \big ( h'(\bar \rho) - \X \bar c    \big )
\cdot(u-\bar u) 
\\
&\ + \bar e \frac{\rho}{\bar\rho}\cdot(u-\bar u)  + \frac{1}{\e} \rho \nabla_x    (\bar u) :   (u - \bar u) \otimes  (u - \bar u)\, .
\end{align*}
Then, using the above relation and the continuity equation for $\bar\rho$, after some straightforward calculations \eqref{eq:int1} becomes
\begin{align}
& \int_{\mathbb{T}^n} \Big [ \eta(\rho, m \left | \bar \rho, \bar m \right.  ) 
- \tfrac{1}{2}\X(\rho - \bar \rho)(c - \bar c)    \Big ]\Bigg |_{\tau=0}^t dx 
\nonumber\\
&\ \leq - \frac{1}{\e^2}\iint_{[0,t]\times\T^n} \rho |u - \bar u|^2dx d\tau - \iint_{[0,t]\times\T^n} \bar e \frac{\rho}{\bar\rho}\cdot(u-\bar u) dx d\tau
\nonumber\\
&\ \ - \frac{1}{\e}\iint_{[0,t]\times\T^n}  \rho \nabla_x    (\bar u) :   (u - \bar u) \otimes  (u - \bar u)dx d\tau 
\nonumber\\
&\ \ - \frac{1}{\e}\iint_{[0,t]\times\T^n} \dx (\bar u) p(\rho \left | \bar \rho \right.  )dx d\tau
\nonumber\\
&\ \ -
 \iint_{[0,t]\times\T^n} \left [\frac{1}{\e} \X \rho \bar u\cdot\nabla_x(c - \bar c) - \X (\rho -\bar\rho)\partial_\tau \bar c  \right ]dx d\tau\, 
\label{eq:int2}
\end{align}
and we are left with the control of the last term in \eqref{eq:int2}.
Now we use again the symmetry of $\mathcal{K}$ and the equation for $\bar \rho$ to conclude
\begin{align*}
 \int_{\mathbb{T}^n} (\rho - \bar\rho) \partial_\tau \bar c  dx & = \int_{\mathbb{T}^n}   (\rho - \bar\rho)  \mathcal{K} \ast \bar \rho_\tau dx = 
 \int_{\mathbb{T}^n}   (  \mathcal{K} \ast \rho -   \mathcal{K} \ast \bar\rho)\bar\rho_\tau dx 
 \\
  &= 
  \int_{\mathbb{T}^n}   (  c-   \bar c)\bar\rho_\tau dx \\
  &= - \frac{1}{\e}  \int_{\mathbb{T}^n}   (  c-   \bar c) \dx (\rho \bar u) dx +  \frac{1}{\e}  \int_{\mathbb{T}^n}   (  c-   \bar c) \dx \big ((\rho - \bar \rho) \bar u \big)dx  \, 
\end{align*}
and the last term of  \eqref{eq:int2} is rewritten as
\begin{equation*}
I = - \frac{\X}{\e} \int_{\mathbb{T}^n} \nabla_x(  c-   \bar c) \cdot \bar u (\rho - \bar \rho) dx  \, .
\end{equation*}
Finally,  \eqref{eq:diff}$_3$ and integration by parts allow to reexpress
$$
\begin{aligned}
I &= - \frac{\X}{\e} \int_{\mathbb{T}^n}  \nabla \bar u : \nabla (c - \bar c) \otimes \nabla (c - \bar c) dx
\\
&\quad+ \frac{\X}{\e} \int_{\mathbb{T}^n}  \dx \bar u \big ( \tfrac{1}{2} |\nabla (c - \bar c)|^2 + \tfrac{\beta}{2} (c - \bar c)^2 + (c - \bar c) < \rho - \bar \rho > \big ) dx
\end{aligned}
$$
and complete the proof of  \eqref{eq:RelEnKorGenFinalweak}.
\end{proof}

\subsection{Stability estimate  and convergence of the relaxation limit}\label{sec:stability}
In this section we  obtain a stability estimate in terms of the relative energy inequality, and 
establish convergence in the diffusive relaxation limit from \eqref{eq:KS} to \eqref{eq:KSlimit}. 
We restrict to pressure functions satisfying \eqref{hypthermo}, \eqref{eq:growthh}, \eqref{hyppress} with
\begin{equation}
\gamma \geq 2 - \frac2n,\ \hbox{if}\ n\ge3;\quad  \gamma > 2 - \frac2n = 1 ,\ \hbox{if}\ n=2\, .
\tag{H$_{exp}$}
\label{eq:gammacond}
\end{equation}
We note that the $\gamma$ threshold    obtained   here is the same obtained  for  the Keller--Segel equilibrium system in the study of the existence of 
 global in time weak solutions (no blow-up for finite time) for general initial data   \cite{Sug06, WCH16}, both being related with  Hardy-Littlewood-Sobolev inequalities.

We start with some preliminary results.

\begin{lemma}\label{lem:generalconvEuler3d}
(a)  Let $h\in C^0[0,+\infty)\cap C^2(0,+\infty)$ satisfy $h''(\rho)>0$ and \eqref{eq:growthh}
for  $\gamma >1$. 
If $\bar\rho\in K =[\delta, \bar R]$ with $\delta>0$ and $\bar R<+\infty$, then there exist positive constants
$R_0$ (depending on $K$) and $C_1$, $C_2$ (depending on $K$ and $R_0$)
such that
\begin{equation}
\label{eq:hnorm}
h(\rho \left | \bar \rho \right.) \geq 
\begin{cases}
C_1 |\rho-\bar\rho|^2, &\hbox{for}\ 0\leq \rho\leq R_0,\ \bar\rho\in K, \\
C_2 |\rho-\bar\rho|^\gamma, &\hbox{for}\ \rho> R_0,\ \bar\rho\in K.
\end{cases}
\end{equation}

\noindent
(b) If $p(\rho)$ and $h(\rho)$ satisfy \eqref{hypthermo} and \eqref{hyppress} then
\begin{equation}
\label{eq:hyppress}
| p(\rho | \bar \rho) | \le A \, h (\rho | \bar \rho) \quad \forall \rho, \bar \rho > 0 \, .
\end{equation}
\end{lemma}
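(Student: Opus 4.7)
The plan is to fix a threshold $R_0$ (to be chosen below) and treat separately the two regimes $\rho \in [0, R_0]$ and $\rho > R_0$, using compactness in the first and the prescribed growth at infinity in the second. For $\rho \in [0, R_0]$ and $\bar\rho \in K = [\delta, \bar R]$, I would study the quotient
\begin{equation*}
g(\rho, \bar\rho) := \frac{h(\rho \,|\, \bar\rho)}{(\rho - \bar\rho)^2},
\end{equation*}
which is continuous off the diagonal, since $h \in C^0[0,\infty)$ and $h'$ is continuous on $[\delta, \bar R]$. Because $\bar\rho \geq \delta$ sits in the region where $h$ is $C^2$, Taylor's theorem extends $g$ continuously to the diagonal with value $\tfrac{1}{2} h''(\bar\rho)$. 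Strict convexity $h'' > 0$ on $(0, \infty)$, together with $h \in C^0[0, \infty)$, forces $h(\rho \,|\, \bar\rho) > 0$ whenever $\rho \neq \bar\rho$ (including at $\rho = 0$), while on the diagonal the extended value $\tfrac12 h''(\bar\rho)$ is positive on $K$. Hence $g$ is continuous and strictly positive on the compact set $[0, R_0] \times K$ and attains a positive minimum $C_1 > 0$, which is exactly the first estimate in \eqref{eq:hnorm}.

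For the tail $\rho > R_0$, the growth hypothesis \eqref{eq:growthh} yields, for any $\eta \in (0, \tfrac{k}{\gamma-1})$,
\begin{equation*}
h(\rho) \geq \Big(\tfrac{k}{\gamma-1} - \eta\Big) \rho^\gamma \qquad \text{for all } \rho \geq R_0,
\end{equation*}
provided $R_0$ is chosen large. Since $h(\bar\rho) + h'(\bar\rho)(\rho - \bar\rho) = O(\rho)$ uniformly in $\bar\rho \in K$, which is of lower order than $\rho^\gamma$ for $\gamma > 1$, after enlarging $R_0$ one gets $h(\rho \,|\, \bar\rho) \geq \tfrac{k}{2(\gamma-1)} \rho^\gamma$. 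Combined with $|\rho - \bar\rho|^\gamma \leq (\rho + \bar R)^\gamma \leq 2^\gamma \rho^\gamma$ (once $R_0 \geq \bar R$), this produces $h(\rho \,|\, \bar\rho) \geq C_2 |\rho - \bar\rho|^\gamma$. The main delicacy is that $h$ is assumed $C^2$ only on $(0, \infty)$, which is why the Taylor step requires $\bar\rho \geq \delta$; at $\rho = 0$ one relies solely on the continuity of $h$ and strict convexity to secure positivity of $h(0 \,|\, \bar\rho)$ uniformly in $\bar\rho \in K$.

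\textbf{Part (b).} I would use the integral remainder form of Taylor's theorem on $(0, \infty)$,
\begin{equation*}
p(\rho \,|\, \bar\rho) = \int_{\bar\rho}^{\rho} (\rho - s)\, p''(s)\, ds, \qquad
h(\rho \,|\, \bar\rho) = \int_{\bar\rho}^{\rho} (\rho - s)\, h''(s)\, ds.
\end{equation*}
The thermodynamic identity \eqref{hypthermo} gives $h''(s) = p'(s)/s$, so hypothesis \eqref{hyppress} translates to $|p''(s)| \leq A\, h''(s)$. Since the weight $(\rho - s)$ has constant sign on the interval of integration and $h'' > 0$, passing to absolute values under the integral and invoking the pointwise bound immediately yields $|p(\rho \,|\, \bar\rho)| \leq A\, h(\rho \,|\, \bar\rho)$, regardless of whether $\rho > \bar\rho$ or $\rho < \bar\rho$.
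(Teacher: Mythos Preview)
Your proposal is correct. For part (b) you use essentially the same argument as the paper: from \eqref{hypthermo} and \eqref{hyppress} one has $|p''|\le A\,h''$ pointwise, and both you and the paper feed this into an integral Taylor remainder for $p(\rho\,|\,\bar\rho)$ and $h(\rho\,|\,\bar\rho)$; the only cosmetic difference is that the paper writes the remainder as the double integral
\[
p(\rho\,|\,\bar\rho)=(\rho-\bar\rho)^2\int_0^1\!\!\int_0^\tau p''\big(s\rho+(1-s)\bar\rho\big)\,ds\,d\tau,
\]
whereas you use the equivalent single-integral form $\int_{\bar\rho}^{\rho}(\rho-s)p''(s)\,ds$. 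For part (a) the paper does not give an argument at all, deferring instead to \cite[Lemma~2.4]{LT12a}; your self-contained compactness-plus-growth argument is the standard route (and almost certainly what is in the cited reference): minimize the continuous positive quotient $h(\rho\,|\,\bar\rho)/(\rho-\bar\rho)^2$ over the compact set $[0,R_0]\times K$, and for large $\rho$ invoke \eqref{eq:growthh} to dominate the linear correction. One minor tightening: once $R_0>\bar R$ you have $|\rho-\bar\rho|=\rho-\bar\rho\le\rho$, so $|\rho-\bar\rho|^\gamma\le\rho^\gamma$ directly, without the factor $2^\gamma$.
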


\begin{proof}
Part (a) is proved in \cite[Lemma 2.4]{LT12a}.  To show (b), one first easily checks the identity
$$
\begin{aligned}
p( \rho | \bar \rho)  &= p(\rho) - p ( \bar \rho) - p'(\bar \rho) (\rho - \bar \rho)
\\
&= (\rho - \bar \rho)^2 \int_0^1 \int_0^\tau p'' (s\rho + (1-s) \bar \rho) ds d\tau \, ,
\end{aligned}
$$ 
and a similar identity holds for $h(\rho | \bar \rho)$. Recall  now that $h'' = \frac{p'}{\rho}$. 
Then hypothesis \eqref{hyppress} implies $|p''| \le A h''$ and thus
$$
\begin{aligned}
| p (\rho | \bar \rho ) | &\le (\rho - \bar \rho)^2 \int_0^1 \int_0^\tau |p'' (s\rho + (1-s) \bar \rho) | ds d\tau
\\
&\le A  (\rho - \bar \rho)^2 \int_0^1 \int_0^\tau h'' (s\rho + (1-s) \bar \rho) ds d\tau 
\\
&= A \,  h(\rho | \bar \rho )
\end{aligned}
$$
and \eqref{eq:hnorm} follows.
\end{proof}

\begin{remark}\label{rem:hnorm}
For exponents $\gamma \ge 2$, by enlarging if necessary $R_0$ so that $|\rho - \bar\rho| \geq 1$ for $\rho > R_0$ and $\bar\rho\in K =[\delta, \bar R]$, we   obtain
\begin{equation}\label{eq:hnorm2}
h(\rho \left | \bar \rho \right.) \geq c_0 |\rho-\bar\rho|^2, \quad \hbox{for}\ \gamma \geq2,\ 
\rho\geq0,\ \bar\rho\in K\, ,
\end{equation}
where $c_0>0$ depends solely on $K$.
\end{remark}

Application of standard elliptic theory to the equation
\begin{equation}\label{eq:elliptic}
-\triangle_{x} (c- \bar c) + \beta(c- \bar c)=  (\rho  - \bar \rho) \, -  < \rho - \bar \rho > \, .
\end{equation}
provides another preliminary estimate.

\begin{lemma}\label{lem:ellipticreg}
Let   $(\rho,m,c)$ be as in Definition \ref{def:wksol} and let $(\bar\rho,\bar c)$ be a smooth solution  of \eqref{eq:KSlimit} such that $\bar\rho\geq\rho^*>0$ 
Then, for any $q\in \left [\frac{2n}{n+2} ,+\infty\right )$ for $n \ge 3$, or $q\in \left (1 , \infty\right )$ for $n=2$, one has
\begin{equation}\label{eq:ellipticreg}
 \int_{\T^n} \left (  \beta |c - \bar c|^2dx + |\nabla_x(c - \bar c)|^2 \right ) \, dx 
=\left | \int_{\T^n} (\rho - \bar \rho)(c-\bar c)dx\right | \leq C \| \rho - \bar \rho \|^2 _{L^q(\T^n)} \,  ,
\end{equation}
where $C$ depends only on the space dimension $n$ and on $\T^n$.
\end{lemma}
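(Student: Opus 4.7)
The plan is to prove Lemma \ref{lem:ellipticreg} by a standard energy estimate for the elliptic equation \eqref{eq:elliptic} combined with a Sobolev embedding, exploiting the fact that $c-\bar c$ can be arranged to have zero mean. Since both $c$ and $\bar c$ are normalized to have zero mean when $\beta=0$, and since integrating \eqref{eq:elliptic} over $\T^n$ yields $\beta \int (c-\bar c)\,dx = \int (\rho-\bar\rho)\,dx - \langle \rho-\bar\rho\rangle |\T^n|=0$ when $\beta>0$ (using that $\langle\cdot\rangle$ denotes the spatial average), in either case $c-\bar c$ has zero mean on the torus.

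First, I would multiply \eqref{eq:elliptic} by $(c-\bar c)$ and integrate over $\T^n$. Integration by parts gives
\begin{equation*}
\int_{\T^n}\bigl(|\nabla_x(c-\bar c)|^2 + \beta (c-\bar c)^2\bigr)dx = \int_{\T^n}(\rho-\bar\rho)(c-\bar c)\,dx - \langle \rho-\bar\rho\rangle \int_{\T^n}(c-\bar c)\,dx,
\end{equation*}
and the last term vanishes by the zero-mean property just discussed. This establishes the first equality in \eqref{eq:ellipticreg}.

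Next, to bound the right-hand side I would apply Hölder's inequality with exponents $q$ and $q'=q/(q-1)$ to obtain
\begin{equation*}
\left|\int_{\T^n}(\rho-\bar\rho)(c-\bar c)\,dx\right| \le \|\rho-\bar\rho\|_{L^q(\T^n)}\,\|c-\bar c\|_{L^{q'}(\T^n)}.
\end{equation*}
Since $c-\bar c$ has zero mean, the Poincaré inequality yields $\|c-\bar c\|_{L^2}\le C\|\nabla_x(c-\bar c)\|_{L^2}$, and then the Sobolev embedding $H^1(\T^n)\hookrightarrow L^{q'}(\T^n)$ gives
\begin{equation*}
\|c-\bar c\|_{L^{q'}(\T^n)} \le C\,\|\nabla_x(c-\bar c)\|_{L^2(\T^n)},
\end{equation*}
valid precisely when $q'\le 2n/(n-2)$ for $n\ge 3$ (i.e.\ $q\ge 2n/(n+2)$), and for any $q'<\infty$ (i.e.\ $q>1$) when $n=2$. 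Combining these estimates with the energy identity and absorbing one power of $\|\nabla_x(c-\bar c)\|_{L^2}$ into the left-hand side yields the desired bound $\|\nabla_x(c-\bar c)\|_{L^2}^2 + \beta\|c-\bar c\|_{L^2}^2 \le C\|\rho-\bar\rho\|_{L^q}^2$.

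The proof is essentially routine; the only points requiring care are the justification that $c-\bar c$ has zero mean in both the $\beta=0$ and $\beta>0$ regimes (which follows from the normalization convention in the former case and from integrating the equation in the latter), and the identification of the admissible range of $q$ as precisely the range for which the critical Sobolev embedding $H^1\hookrightarrow L^{q'}$ holds on $\T^n$. The dimension-dependent threshold $q\ge 2n/(n+2)$ arises from the Sobolev conjugate $2^*=2n/(n-2)$ and is the source of the asymmetry between $n=2$ and $n\ge 3$ in the statement.
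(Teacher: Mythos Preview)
Your argument is correct and in fact slightly more elementary than the paper's. Both proofs start from the same energy identity obtained by testing \eqref{eq:elliptic} against $c-\bar c$, and both rely on the zero-mean property of $c-\bar c$ (which, as you note, follows from the normalization when $\beta=0$ and from integrating the equation when $\beta>0$, implicitly using $|\T^n|=1$). The difference lies in how the bound $\|\nabla_x(c-\bar c)\|_{L^2}\le C\|\rho-\bar\rho\|_{L^q}$ is obtained. The paper invokes the Sobolev embedding $W^{1,q}(\T^n)\hookrightarrow L^2(\T^n)$ together with $L^q$ elliptic regularity for \eqref{eq:elliptic}, so that $\|\nabla_x(c-\bar c)\|_{L^2}\le C\|c-\bar c\|_{W^{2,q}}\le C\|\rho-\bar\rho\|_{L^q}$. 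You instead apply H\"older's inequality to the right-hand side of the energy identity and the dual embedding $H^1(\T^n)\hookrightarrow L^{q'}(\T^n)$, then absorb one factor of $\|\nabla_x(c-\bar c)\|_{L^2}$ back into the left. Your route avoids Calder\'on--Zygmund type $L^q$ estimates and closes the bound using only the basic energy identity; the paper's route is slightly less self-contained but makes the $W^{2,q}$ regularity of $c-\bar c$ explicit, which is relevant for the remark preceding the lemma. The admissible range of $q$ is the same either way, since $W^{1,q}\hookrightarrow L^2$ and $H^1\hookrightarrow L^{q'}$ are dual embeddings.
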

\begin{proof}
First of all, we note that  $c-\bar c$ is of zero mean in $\T^n$   for $t\geq 0$ and for any $\beta > 0$, and also by definition for $\beta=0$.
We apply elliptic estimates for \eqref{eq:elliptic} for any non negative screening coefficient $\beta$, including $\beta=0$. Indeed,
we multiply this equation by $c - \bar c$ and integrate over $\T^n$ to get
\begin{equation*}
\begin{aligned}
\left | \int_{\T^n} (\rho - \bar \rho)(c-\bar c)dx\right |
&\leq \beta \int_{\T^n} |c - \bar c|^2dx + 
\int_{\T^n} |\nabla_x(c - \bar c)|^2dx
\\
& \leq C \int_{\T^n} |\nabla_x(c - \bar c)|^2dx,
\end{aligned}
\end{equation*}
using Poincar\'e's inequality if $\beta >0$, for a constant $C$ depending only on $\T^n$. 
We use the Sobolev embedding theorem in the form
$$
W^{1,q} (\T^n) \subset L^2 (\T^n)  \quad \mbox{ for $q \ge \tfrac{2 n}{n+2}$}
$$
and then $L^q$ elliptic regularity estimates ($q \ne 1$) to conclude
\begin{equation*}
\begin{aligned}
\left \| \nabla_x(c - \bar c)\right\|_{L^2(\T^n)}
&\leq C \left \| \nabla_x(c - \bar c)\right\|_{W^{1,q}(\T^n)}
\\
&\leq C \left \| c - \bar c\right\|_{W^{2,q}(\T^n)}
\\
& \leq C \left \| \rho  - \bar \rho\right\|_{L^q(\T^n)}
\end{aligned}
\end{equation*}
for any $\frac{2n}{n+2} \le q < \infty$ for $n \ge 3$, restricted to  $ \frac{2n}{n+2} = 1 < q < \infty$  in the case $n=2$.
\end{proof}

\begin{lemma}
Let $\gamma$ satisfy \eqref{eq:gammacond}, let $(\rho, m, c)$ and $(\bar \rho, \bar c)$ be as in Lemma \ref{lem:ellipticreg}.
There exists a constant $K > 0$ such that
\begin{equation}\label{mainbound}
\left | \int_{\T^n} (\rho - \bar \rho)(c-\bar c)dx\right | \leq K \int_{\T^n} h(\rho \left | \bar \rho \right.)dx\, .
\end{equation}
\end{lemma}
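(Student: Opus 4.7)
The strategy is a two-step reduction: Lemma \ref{lem:ellipticreg} already bounds $\bigl|\int(\rho-\bar\rho)(c-\bar c)\,dx\bigr|$ by $\|\rho-\bar\rho\|_{L^q(\T^n)}^2$ for any $q\in[\tfrac{2n}{n+2},\infty)$ if $n\ge 3$, or $q\in(1,\infty)$ if $n=2$. It thus suffices to bound $\|\rho-\bar\rho\|_{L^q}^2$ by $H := \int_{\T^n} h(\rho|\bar\rho)\,dx$ for some admissible $q$, using the coercivity of $h(\cdot|\bar\rho)$ from Lemma \ref{lem:generalconvEuler3d} and Remark \ref{rem:hnorm}. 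I would take $q = \min(\gamma,2)$. Condition \eqref{eq:gammacond} was designed exactly to make this admissible: for $n=2$, $q>1$ is automatic from $\gamma>1$; for $n\ge 3$ the elementary inequality $2-\tfrac{2}{n}\ge \tfrac{2n}{n+2}$ gives $q\ge \tfrac{2n}{n+2}$.

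When $\gamma\ge 2$ the argument is immediate: Remark \ref{rem:hnorm} provides the uniform pointwise bound $h(\rho|\bar\rho)\ge c_0(\rho-\bar\rho)^2$, whence $\|\rho-\bar\rho\|_{L^2}^2\le c_0^{-1} H$ and Lemma \ref{lem:ellipticreg} with $q=2$ concludes. When $1<\gamma<2$, I take $q=\gamma$ and split $\T^n=A\sqcup B$ with $A=\{\rho\le R_0\}$, $B=\{\rho>R_0\}$, where $R_0$ is furnished by Lemma \ref{lem:generalconvEuler3d}(a). On $A$, the pointwise bound $h(\rho|\bar\rho)\ge C_1(\rho-\bar\rho)^2$ and Jensen's inequality for the concave map $t\mapsto t^{\gamma/2}$ give
\[
\int_A |\rho-\bar\rho|^\gamma\,dx \;\le\; C_1^{-\gamma/2}\!\int_A h(\rho|\bar\rho)^{\gamma/2}\,dx \;\le\; C\,|\T^n|^{1-\gamma/2}\, H^{\gamma/2}.
\]
On $B$, the bound $h(\rho|\bar\rho)\ge C_2|\rho-\bar\rho|^\gamma$ yields $\int_B|\rho-\bar\rho|^\gamma\,dx\le C_2^{-1}H$ at once. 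Summing and raising to the power $2/\gamma>1$ produces
\[
\|\rho-\bar\rho\|_{L^\gamma}^2 \;\le\; C\bigl(H + H^{2/\gamma}\bigr).
\]

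The main obstacle is closing this last inequality: since $\gamma<2$ forces $2/\gamma>1$, the term $H^{2/\gamma}$ is not majorized by $H$ unless $H$ is known a priori to lie in a bounded set. I would remove this obstruction by noting that \eqref{eq:growthh} combined with the smoothness and uniform positivity of $\bar\rho$ implies the pointwise bound $h(\rho|\bar\rho)\le C(1+\rho^\gamma)$; hence $H$ is controlled by the total mass of $\rho$ and by its $L^\gamma$ norm, both of which are inherited from the weak-solution energy estimate \eqref{hypCauchyK2}. With $H\le E_0$ secured, $H^{2/\gamma}=H\cdot H^{2/\gamma-1}\le E_0^{2/\gamma-1}\,H$, and \eqref{mainbound} follows with $K=K(E_0,c_0,C_1,C_2,R_0,|\T^n|)$. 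It is worth emphasizing that condition \eqref{eq:gammacond} is sharp for this argument: it is precisely the threshold at which the elliptic embedding used in Lemma \ref{lem:ellipticreg} and the Jensen interpolation in the low-density regime become simultaneously admissible, which is why the same exponent governs well-posedness of the limiting Keller-Segel system.
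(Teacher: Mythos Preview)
Your proof is correct but takes a somewhat different route from the paper in the case $\gamma<2$. You choose $q=\gamma$ and handle the large-density set $B=\{\rho>R_0\}$ by the direct pointwise bound $|\rho-\bar\rho|^\gamma\le C_2^{-1}h(\rho|\bar\rho)$, which yields $\|\rho-\bar\rho\|_{L^\gamma}^2\le C(H+H^{2/\gamma})$ and then requires an a~priori bound $H\le E_0$ to close. The paper instead selects $q=\tfrac{2}{3-\gamma}<\gamma$ and on $B$ interpolates
\[
\|\rho-\bar\rho\|_{L^q(B)}^2\le\|\rho-\bar\rho\|_{L^1}^{2-\gamma}\,\|\rho-\bar\rho\|_{L^\gamma(B)}^\gamma\,;
\]
the $L^1$ factor is bounded by $(M+\bar M)^{2-\gamma}$ via mass conservation and the $L^\gamma$ factor by $C_2^{-1}H$, producing a bound \emph{linear} in $H$ with a constant depending on the solution only through its conserved mass. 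This is cleaner downstream, since $K$ feeds into the smallness condition \eqref{eq:convexitylargecond} on the chemosensitivity and one would like it to depend on as little of the solution as possible. Two small corrections: the $L^\gamma$ bound on $\rho$ is not literally a consequence of \eqref{hypCauchyK2} when $\gamma<2$ (the attractive potential term $-\tfrac12\X\rho c$ spoils coercivity of the energy in that range); it is rather supplied directly by the regularity $\rho\in C([0,T];L^\gamma)$ assumed in Definition~\ref{def:wksol}. And your remark on sharpness of \eqref{eq:gammacond} does not quite fit your own argument: the condition $\tfrac{2}{3-\gamma}\ge\tfrac{2n}{n+2}$ is equivalent to $\gamma\ge 2-\tfrac{2}{n}$, so \eqref{eq:gammacond} is sharp for the paper's choice of $q$, whereas your choice $q=\gamma$ only requires the weaker $\gamma\ge\tfrac{2n}{n+2}$.
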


\begin{proof}
We work in the range $2 - \frac{2}{n}\leq\gamma$ for $n=3$ and $2 - \frac{2}{n}<\gamma$ for $n=2$ and split the proof in
two cases: $\gamma < 2$ and $\gamma \ge 2$.

{\it Case $\gamma < 2$:}
Note that, for any $n\geq2$, $2 - \frac{2}{n}\geq \frac{2n}{n+2}$. Using \eqref{eq:ellipticreg} 
with $ \frac{2n}{n+2}\leq q <\gamma < 2 $ for  $n=3$, and $ \frac{2n}{n+2}< q <\gamma < 2$ for  $n=2$, we obtain
\begin{align}
\left | \int_{\T^n} (\rho - \bar \rho)(c-\bar c)dx\right | 
&\leq C  \left (  \int_{\T^n\cap\{\rho\leq R_0\}} | \rho - \bar\rho|^q dx \right )^{\frac{2}{q}}
 + 
 C \left (  \int_{\T^n\cap\{\rho> R_0\}} | \rho - \bar\rho|^qdx \right)^{\frac{2}{q}}
 \nonumber
\\
\label{eq:gammasmall1}
 &\leq \bar{C}_1(\T^n )  \int_{\T^n\cap\{\rho\leq R_0\}}  | \rho - \bar\rho|^2 dx 
 + 
 C \left (  \int_{\T^n\cap\{\rho> R_0\}} | \rho - \bar\rho|^qdx \right)^{\frac{2}{q}}, 
\end{align}

For $1<q<\gamma$, the second term on the right--hand--side of \eqref{eq:gammasmall1} is treated via interpolation between the $L^1$  and $L^\gamma$ spaces:
For $\theta$ satisfying $\frac{1}{q} = \frac{\theta}{\gamma} + 1-\theta$, it is 
\begin{equation}\label{eq:gammasmall2}
\| \rho -\bar\rho\|^2_{L^q({\T^n\cap\{\rho> R_0\})}} \leq \| \rho -\bar\rho\|^{2-2\theta}_{L^1({\T^n\cap\{\rho> R_0\})}} \| \rho -\bar\rho\|^{2\theta}_{L^\gamma({\T^n\cap\{\rho> R_0\})}}, 
\end{equation}
We select $2\theta=\gamma$ and the corresponding $q$ becomes $q=\frac{2}{3-\gamma}$; this
$q$  is indeed admissible (i.e.\ 
$1<\frac{2n}{n+2}\leq q <\gamma$ if $n=3$; $ 1=\frac{2n}{n+2}< q <\gamma$ if $n=2$), in view of  \eqref{eq:gammacond}.
Hence, \eqref{eq:gammasmall2}, the conservation of mass, and \eqref{eq:hnorm} give
\begin{align}
\left (  \int_{\T^n\cap\{\rho> R_0\}} | \rho - \bar\rho|^qdx \right)^{\frac{2}{q}}  &\leq  C (M + \bar M)^{2-\gamma}  \int_{\T^n\cap\{\rho> R_0\}} | \rho - \bar\rho|^\gamma dx
\nonumber
\\
&\leq  \frac{C}{C_2} (M + \bar M)^{2-\gamma} \int_{\T^n\cap\{\rho> R_0\}} h(\rho \left | \bar \rho \right.)dx\, .
\label{relat1}
\end{align}
Combining \eqref{eq:gammasmall1}, \eqref{relat1} and \eqref{eq:hnorm} we derive \eqref{mainbound} for $\gamma < 2$.

{\it Case $\gamma\geq2$:} 
In that case, we select  $q = 2 > \frac{2n}{n+2}$, and use  \eqref{eq:hnorm2} and \eqref{eq:ellipticreg} to immediately obtain
\begin{equation}\label{eq:gamma>2}
\left | \int_{\T^n} (\rho - \bar \rho)(c-\bar c)dx\right | \leq \frac{C}{c_0} \int_{\T^n} h(\rho \left | \bar \rho \right.)dx\, , 
\end{equation}
where $C$, $c_0>0$ depend solely on  the dimension, $\T^n$ and on the bounds of the smooth limit solution  $\bar\rho$.
\end{proof}

Our objective is to use the relative energy of the Euler-Poisson system
\begin{equation}
\Phi(t) = \int_{\T^n} \left ( \tfrac{1}{2} \rho \left | \frac{m}{\rho} - \frac{\bar m}{\bar \rho} \right |^2 
+  h(\rho | \bar \rho)   -  \tfrac{1}{2} \X  (\rho - \bar \rho ) ( c - \bar c)   \right )\, dx 
\label{eq:norm}
\end{equation}
as a yardstick for estimating the error terms in the relative entropy identity \eqref{eq:RelEnKorGenFinalweak}. To ensure
that \eqref{eq:norm} will measure distance of solutions, we restrict the values of the chemosensitive coefficient $\X$:

\begin{proposition}\label{prop:convHlarge}
Let   $(\rho,m,c)$ be as in Definition \ref{def:wksol} and let $(\bar\rho,\bar c)$ be a smooth solution  of \eqref{eq:KSlimit} such that $\bar\rho\geq\rho^*>0$. 
If the parameter
\begin{equation}\label{eq:convexitylargecond}
 {\X}< \frac{2}{K} \, , 
 \tag{H$_c$}
\end{equation} 
where $K$ is defined in \eqref{mainbound}, then for $\lambda := 1-  \tfrac{1}{2} K \X > 0$ the relative energy $\Phi (t)$ satisfies
\begin{equation}\label{eq:convexityresult}
\Phi (t)  \ge 
 \int_{\T^n} \left ( \tfrac{1}{2} \rho \left | \frac{m}{\rho} - \frac{\bar m}{\bar \rho} \right |^2 
+ \lambda  h(\rho | \bar \rho)  \right ) \, dx  > 0  \, .
\end{equation}
\end{proposition}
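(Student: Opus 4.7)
The plan is to combine the key bound \eqref{mainbound} established just above with the sign information on the cross term coming from Lemma \ref{lem:ellipticreg}. This reduces the proposition to a one-line manipulation, so my role here is really just to assemble the two ingredients correctly and track the constant.

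First I would observe that the cross term in the relative energy has a definite sign. Indeed, testing the elliptic equation \eqref{eq:elliptic} against $c - \bar c$ (as in the proof of Lemma \ref{lem:ellipticreg}) gives
\begin{equation*}
\int_{\T^n} (\rho - \bar\rho)(c-\bar c)\, dx = \int_{\T^n}\bigl(\beta |c - \bar c|^2 + |\nabla_x(c-\bar c)|^2\bigr)\, dx \;\ge\; 0,
\end{equation*}
since $c - \bar c$ has zero mean on $\T^n$. Hence the absolute value in \eqref{mainbound} can be dropped, and
\begin{equation*}
\int_{\T^n} (\rho - \bar\rho)(c-\bar c)\, dx \;\le\; K \int_{\T^n} h(\rho \,|\, \bar\rho)\, dx.
\end{equation*}

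Next I would multiply by $\tfrac{1}{2}\X > 0$ and subtract from $\int h(\rho|\bar\rho)\,dx$: under the smallness assumption $\X < 2/K$ this gives
\begin{equation*}
\int_{\T^n}\!\Bigl(h(\rho\,|\,\bar\rho) - \tfrac{1}{2}\X(\rho-\bar\rho)(c-\bar c)\Bigr)dx \;\ge\; \Bigl(1 - \tfrac{K\X}{2}\Bigr)\!\int_{\T^n} h(\rho\,|\,\bar\rho)\, dx = \lambda \int_{\T^n} h(\rho\,|\,\bar\rho)\, dx,
\end{equation*}
with $\lambda > 0$, which is exactly \eqref{eq:convexityresult}.

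There is no real obstacle here; the substantive analytic content sits in Lemma \ref{lem:ellipticreg} and the Sobolev/interpolation argument leading to \eqref{mainbound}. The one subtlety worth flagging in the write-up is that \eqref{mainbound} as stated involves an absolute value, so one must justify dropping it via the non-negativity argument above; this is also where the hypothesis $\bar\rho \ge \rho^* > 0$ and the zero-mean normalization of $c$ for $\beta = 0$ (ensuring $c - \bar c$ has zero mean) are silently used.
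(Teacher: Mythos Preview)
Your argument is correct and matches the paper's approach: the proposition is presented there as an immediate summary of the preceding lemmas, and the single substantive input is indeed \eqref{mainbound}. One small remark: the nonnegativity step is unnecessary, since $\bigl|\int(\rho-\bar\rho)(c-\bar c)\,dx\bigr|\le K\int h(\rho|\bar\rho)\,dx$ already yields the upper bound $\int(\rho-\bar\rho)(c-\bar c)\,dx\le K\int h(\rho|\bar\rho)\,dx$ without any sign information; the lower bound $\bar\rho\ge\rho^*>0$ is used not for the sign but to invoke Lemma~\ref{lem:generalconvEuler3d} inside \eqref{mainbound}.
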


We prove: 

\begin{theorem}\label{th:finalKS}
Let  $T>0$ be fixed,  let   $(\rho,m,c)$ be as in Definition \ref{def:wksol} and  $(\bar\rho,\bar c)$ be a smooth solution  of \eqref{eq:KSlimit} such that $\bar\rho\geq \delta >0$.
Assume that the pressure $p(\rho)$ satisfies \eqref{hypthermo}, \eqref{eq:growthh}, \eqref{hyppress}, with exponent $\gamma$ restricted by \eqref{eq:gammacond},
the coefficient $\X$ is restricted by \eqref{eq:convexitylargecond}, and $\beta \ge 0$. Then, the  stability estimate
\begin{equation*}
\Phi(t) \leq C \big (\Phi(0) + \e^4 \big ), \quad t\in [0,T] \, ,
\end{equation*}
holds true,  where $C$ is a positive constant depending on $T$, $\X$, $M$, $\bar\rho$ and its derivatives.
Moreover, if $\Phi(0)\to 0$ as $\e \downarrow 0$, then
\begin{equation*}
\sup_{t\in[0,T]}\Phi(t) \to 0, \ \hbox{as}\ \e \downarrow 0\, .
\end{equation*}
\end{theorem}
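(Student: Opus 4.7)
The plan is to combine the relative energy identity \eqref{eq:RelEnKorGenFinalweak} from Theorem \ref{prop:relenKS} with the convexity bound \eqref{eq:convexityresult} of Proposition \ref{prop:convHlarge}, dominate each term on the right-hand side by a multiple of $\Phi(\tau)$ plus an $O(\e^4)$ remainder, and close by Gronwall's inequality. Under hypothesis \eqref{eq:convexitylargecond}, the bound \eqref{eq:convexityresult} provides the lower estimate
\begin{equation*}
\int_{\T^n}\Big[\eta(\rho,m\,|\,\brho,\barm) - \tfrac12\X(\rho-\brho)(c-\bar c)\Big]dx \;\geq\; \min(\lambda,1)\,\Phi(t),
\end{equation*}
while the reverse direction of \eqref{mainbound} gives an upper bound $C\Phi(0)$ for the same quantity at $\tau=0$. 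Consequently the left-hand side of \eqref{eq:RelEnKorGenFinalweak} is comparable to $\lambda\Phi(t)$ up to the initial contribution $C\Phi(0)$.

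The crucial observation for controlling the right-hand side of \eqref{eq:RelEnKorGenFinalweak} is that, by \eqref{defbarmep} and the assumed smoothness and strict positivity of $(\brho,\bar c)$ on $[0,T]\times\T^n$, one has $\baru = \barm/\brho = O(\e)$ and $\nabla_x\baru = O(\e)$ uniformly. Consequently every prefactor of the form $\tfrac1\e\nabla_x(\barm/\brho)$ or $\tfrac1\e\dx(\barm/\brho)$ appearing in \eqref{eq:RelEnKorGenFinalweak} is uniformly $O(1)$, and the remaining terms split into three classes: the relative pressure term $\tfrac1\e\iint \dx(\baru)\,p(\rho|\brho)\,dxd\tau$ is controlled by $C\iint h(\rho|\brho)\,dxd\tau$ via \eqref{eq:hyppress}; the three terms built from $(c-\bar c)^2$, $|\nabla_x(c-\bar c)|^2$ and the mean $<\rho-\brho>$ are controlled by $C\iint h(\rho|\brho)\,dxd\tau$ through Lemma \ref{lem:ellipticreg} combined with \eqref{mainbound}; and the quadratic term involving $(u-\baru)\otimes(u-\baru)$ is controlled by $C\iint\rho|u-\baru|^2\,dxd\tau$.

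The error contribution is treated by the weighted Young inequality
\begin{equation*}
\left|\iint \bar e\cdot\tfrac{\rho}{\brho}(u-\baru)\,dxd\tau\right| \;\leq\; \tfrac1{2\e^2}\iint\rho|u-\baru|^2\,dxd\tau + \tfrac{\e^2}{2}\iint\tfrac{\rho}{\brho^2}|\bar e|^2\,dxd\tau,
\end{equation*}
where the first summand is absorbed by the explicit dissipation $-\tfrac1{\e^2}\iint\rho|u-\baru|^2$ already present on the right-hand side of \eqref{eq:RelEnKorGenFinalweak}, and the second is $O(\e^4)$ since $\bar e = O(\e)$ by \eqref{eq:errortermdef} and $\int\rho\,dx = M$ is conserved. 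Assembling every estimate yields
\begin{equation*}
\lambda\Phi(t) + \tfrac{1}{2\e^2}\iint_{[0,t]\times\T^n}\rho|u-\baru|^2\,dxd\tau \;\leq\; C\Phi(0) + C\e^4 + C\int_0^t\Phi(s)\,ds,
\end{equation*}
and Gronwall's inequality produces the stated stability bound. The convergence statement $\sup_{[0,T]}\Phi(t)\to 0$ then follows instantly from $\Phi(0)\to 0$.

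The step I expect to be the most delicate is the bookkeeping of the zero-mean correction $\tfrac{\X}{\e}\dx(\baru)(c-\bar c)<\rho-\brho>$: since both the Euler--Poisson and Keller--Segel systems conserve total mass, $<\rho-\brho>(\tau) = <\rho-\brho>(0)$ is constant in time and is controlled by $(\Phi(0))^{1/2}$ through Lemma \ref{lem:generalconvEuler3d}, while $\|c-\bar c\|_{L^2}$ is controlled by Lemma \ref{lem:ellipticreg} and Poincar\'e's inequality; together this produces a contribution of the form $C(\Phi(0))^{1/2}(\Phi(\tau))^{1/2}$ which is readily absorbed into the right-hand side of the Gronwall inequality. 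Beyond this bookkeeping the argument is driven entirely by the two structural ingredients \eqref{eq:convexityresult} and \eqref{mainbound}, and the remainder is routine.
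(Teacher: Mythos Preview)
Your proposal is correct and follows the same overall strategy as the paper: combine the relative energy inequality \eqref{eq:RelEnKorGenFinalweak} with the lower bound \eqref{eq:convexityresult}, estimate every right-hand side term by $C\Phi(\tau)$ using \eqref{eq:hyppress}, \eqref{eq:ellipticreg}, \eqref{mainbound} and $\nabla_x\baru=O(\e)$, split the error term via a weighted Young inequality to absorb half the dissipation, and close by Gronwall.

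The one point where you diverge from the paper is the handling of the zero-mean correction $\tfrac{\X}{\e}\int\dx(\baru)(c-\bar c)<\rho-\brho>\,dx$. You freeze $<\rho-\brho>$ at its initial value via mass conservation and bound it by a power of $\Phi(0)$; the paper instead substitutes the elliptic identity $<\rho-\brho>=(\rho-\brho)+\triangle_x(c-\bar c)-\beta(c-\bar c)$ and integrates by parts, converting this term into quadratic expressions in $(c-\bar c)$, $\nabla_x(c-\bar c)$ and $(\rho-\brho)(c-\bar c)$, all directly bounded by $C\Phi(\tau)$, at the price of one term involving $\nabla_x\dx(\baru)$. Your route avoids second derivatives of $\baru$ but your claim that $|<\rho-\brho>|\leq C\Phi(0)^{1/2}$ via Lemma \ref{lem:generalconvEuler3d} is slightly imprecise when $\gamma<2$: there the lemma yields only $|<\rho-\brho>|\leq C(\Phi(0)^{1/2}+\Phi(0)^{1/\gamma})$, which still closes after Young's inequality and the finite-energy bound, but not as cleanly as stated. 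The paper's route delivers a pure $C\int_0^t\Phi(\tau)\,d\tau$ contribution from this term with no $\Phi(0)$ factor.
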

\begin{proof}
We employ \eqref{eq:RelEnKorGenFinalweak} for the relative energy  \eqref{eq:norm} to conclude 
\begin{equation}
\label{last1}
  \Phi(t)  + \frac{1}{\e^2}  \int_0^t\int_{\T^n}  \rho \left |\frac{m}{\rho} - \frac{\bar m}{\bar \rho}  \right |^2 dxd\tau  \leq \Phi(0) + \int_0^t\int_{\T^n}\big(|Q| + |E|\big) dxd\tau\, ,
\end{equation}
for $t\in[0,T]$. The quadratic $Q$ and error $E$ terms in \eqref{last1} are defined by
\begin{align*}
E&:= \bar e \cdot \frac{\rho}{\bar\rho}\left ( \frac{m}{\rho} - \frac{\bar m}{\bar \rho} \right )\; \\
Q&:= \frac{1}{\e} 
\dx \left (\frac{\bar m}{\bar\rho} \right ) \Big [  - p(\rho \left  | \bar \rho \right .) +  \X \big (\tfrac{1}{2}\beta (c - \bar c)^2 +\tfrac{1}{2} |\nabla_x (c-\bar c)|^2 + (c - \bar c) < \rho - \bar \rho > \big) \Big ] 
 \\
& \   - \frac{\X}{\e}\nabla_x  \left (\frac{\bar m}{\bar\rho} \right ) :  \nabla_x (c - \bar c) \otimes \nabla_x (c - \bar c) 
 -  \frac{1}{\e} \rho \nabla_x  \left (\frac{\bar m}{\bar\rho} \right ) : \left (\frac{m}{\rho} - \frac{\bar m}{\bar\rho} \right ) \otimes \left (\frac{m}{\rho} - \frac{\bar m}{\bar\rho} \right )\, .
\end{align*}

Using \eqref{eq:elliptic} and integration by parts, we compute
\begin{align*}
 \frac{\X}{\e}\int_{\T^n}  &\dx  \left (\frac{\bar m}{\bar\rho} \right ) (c - \bar c) < \rho - \bar \rho > dx
 \\
 & =  
 \frac{\X}{\e}\int_{\T^n} \dx  \left (\frac{\bar m}{\bar\rho} \right ) (c - \bar c) ( \rho - \bar \rho )dx 
  -  \frac{\X}{\e} \int_{\T^n} \dx  \left (\frac{\bar m}{\bar\rho} \right ) \beta (c - \bar c)^2 dx \\
  &\quad - \frac{\X}{\e}\int_{\T^n} \dx  \left (\frac{\bar m}{\bar\rho} \right )  |\nabla_x(c - \bar c)|^2 dx
  - \frac{\X}{\e} \int_{\T^n} (c-\bar c) \nabla_x \left ( \dx  \left (\frac{\bar m}{\bar\rho} \right ) \right )  \cdot \nabla_x(c - \bar c) dx\, .
\end{align*}

From \eqref{defbarmep} and the smoothness of $\bar \rho$, we have that $\bar u = \frac{\bar m}{\bar \rho} =  O(\eps)$ and $\tfrac{1}{\eps} |\nabla_x \bar u | = O(1)$ as well as 
$\tfrac{1}{\eps} |\nabla^2_x \bar u | = O(1)$. Thus, we use Young's inequality to bound  the last term of the above relation as follows
\begin{equation*}
\frac{\X}{\e}\int_{\T^n} \left | (c-\bar c) \nabla_x \left ( \dx  \left (\frac{\bar m}{\bar\rho} \right ) \right )  \cdot \nabla_x(c - \bar c) \right | dx 
\leq C \int_{\T^n} \left ( (c-\bar c)^2 + |\nabla_x(c - \bar c)|^2 \right )dx \leq C \Phi(t) 
\end{equation*}
using \eqref{eq:ellipticreg}, \eqref{mainbound}, \eqref{eq:convexityresult} and the Poincar\'e inequality for the zero mean function $c-\bar c$  in $\T^n$ if $\beta = 0$, and where 
  $C>0$ (here and below) denotes  a generic positive constant. 
The estimates for the remaining terms in $Q$ are straightforward, again by virtue of \eqref{eq:ellipticreg}, \eqref{mainbound}, and in view of \eqref{eq:hyppress}
and \eqref{eq:convexityresult} we end up with  
$$ 
\int_0^t\int_{\T^n}|Q| dxd\tau \leq C\int_0^t\Phi(\tau)d\tau.
$$ 
Moreover, we recall $\bar e = O(\e)$ so that 
\begin{align*}
\int_0^t\int_{\T^n}|E| dxd\tau &\leq \frac{\e^2}{2}\int_0^t\int_{\T^n} \left | \frac{\bar e}{\bar\rho}\right |^2\rho dxd\tau + \frac{1}{2\e^2}\int_0^t\int_{\T^n}\rho \left |\frac{m}{\rho} - \frac{\bar m}{\bar \rho} \right |^2dxd\tau\\
&\leq CMT\e^4 + \frac{1}{2\e^2}\int_0^t\int_{\T^n}\rho \left |\frac{m}{\rho} - \frac{\bar m}{\bar \rho} \right |^2dxd\tau\, .
\end{align*}
Hence \eqref{last1} gives 
\begin{equation*}
  \Phi(t)  + \frac{1}{2\e^2}  \int_0^t\int_{\T^n}  \rho \left |\frac{m}{\rho} - \frac{\bar m}{\bar \rho}  \right |^2 dxd\tau  \leq \Phi(0) + C\e^4 + C\int_0^t\Phi(\tau)d\tau
\end{equation*}
and  Gronwall's Lemma gives the desired result.
\end{proof}

\section{From the Euler-Korteweg  system with friction to the  Cahn--Hilliard  equation}\label{sec:korteweg}
In this section we discuss the relaxation limit from the Euler--Korteweg system for the motion for capillary fluids  towards the Cahn--Hilliard equation. 
In our discussion we restrict to monotone pressure laws $p(\rho)$ and thus the model does not cover the case of phase transitions.
This model fits under the context of the general theory described in Section \ref{sec:largefrictvsgradflows} for a potential energy
\begin{equation*}
\mathcal{E}(\rho) = \int \left (h(\rho) +\tfrac12 C_\kappa |\nabla_x \rho |^2 \right ) dx, \quad C_\kappa>0\, ,
\end{equation*}
for which  the generator $\frac{\delta \cE}{\delta \rho }(\rho)$
takes the form 
\begin{equation*}
\frac{\delta \cE}{\delta \rho }(\rho) =\left.\big (\partial_\rho - \dx \partial_q\big )\left (h(\rho) +\tfrac12 C_\kappa |q |^2 \right )\right |_{q= \nabla_x \rho} =   
h'(\rho) - C_\kappa \triangle_x\rho\, ,
\end{equation*}
 the stress functional $S$ in \eqref{stresshyp} is given by
\begin{equation}
  S = \left (-p(\rho) +\frac12 C_\kappa |\nabla_x \rho|^2 + C_\kappa \rho
 \triangle_x \rho \right ) \I - C_\kappa \nabla_x \rho\otimes\nabla_x \rho\, ,
 \label{eq:defstressCH}
 \end{equation}
and $\I$ stands for the identity matrix.
Hence,  \eqref{eq:frictapprox} may be expressed as
\begin{equation}
    \begin{cases}
	\displaystyle{\rho_{t} +\frac{1}{\e}\dx m =0} &   \\[6pt]
       \displaystyle{ m_{t} + \frac{1}{\e}\dx \frac{m\otimes m}{\rho} 
	= -\frac{1}{\e^{2}}m - \frac{1}{\e}\rho \nabla_x \big (h'(\rho) - C_\kappa \triangle_x\rho \big)} & \\[6pt]
	\qquad \qquad \qquad \qquad \quad= \displaystyle{ -\frac{1}{\e^{2}}m + \frac{1}{\e}\dx S}\, ,& 
    \end{cases}
    \label{eq:Kortcap}
\end{equation}
where $t\in\R$, $x\in\T^{n}$, $\rho\geq 0$, $C_\kappa$ is a non negative constant, 
$m\in\R^{3}$, and the pressure $p(\rho)$ is defined by $p'(\rho) = \rho h''(\rho)$ and satisfies the monotonicity condition
 $p'(\rho) > 0$ and the stress tensor $S$ is given by \eqref{eq:defstressCH}.
As $\e\downarrow 0$, system \eqref{eq:Kortcap}, after an appropriate scaling of the momentum $m$,  formally reduces to the following Cahn--Hilliard equation
\begin{equation}
	\rho_{t} =\dx \left (\rho \nabla_{x}\big (h'(\rho) - C_\kappa \triangle_x\rho \big) \right )  = 
	\dx \big ( \nabla_{x}p(\rho) - C_\kappa\rho  \nabla_{x}\triangle_x\rho \big )\, .
	    \label{eq:cahnhill}
\end{equation}
Our goal is to rigorously validate this limit using the relative entropy identity \eqref{eq:relengfrelax} of Section \ref{sec:largefrictvsgradflows}.

We recall that the balance of total energy \eqref{eq:energy}, when expressed in the context of the system \eqref{eq:Kortcap}
takes the form:
\begin{align}
 &  \partial_t \left  ( \eta(\rho,m) +  \frac{C_\kappa}{2} |\nabla_x\rho|^2 \right )  +\frac{1}{\e}\dx  \big (q(\rho, m) - C_\kappa m \triangle_x \rho + C_\kappa \nabla_x \rho\dx m \big ) 
   \nonumber\\
   &\  =  
    - \frac{1}{\e^{2}}\frac{|m|^{2}}{\rho} \leq 0,\, 
    \label{eq:entrKort2}
\end{align}
where we used the notation
\begin{equation*}
   \eta(\rho,m) =  \frac{1}{2}\frac{|m|^{2}}{\rho} + h(\rho) \, , 
\qquad 
    q(\rho,m) = \frac{1}{2}m\frac{|m|^{2}}{\rho^{2}} + mh'(\rho)\, ,
\end{equation*}
for  the mechanical energy of the compressible Euler equations and its associated flux.
Moreover,  smooth solutions $\rho$ of \eqref{eq:cahnhill} satisfy the energy dissipation identity
\begin{align}
 &  \partial_t \left  ( h(\rho) +  \frac{C_\kappa}{2} |\nabla_x \rho|^2 \right )  
   -\dx  \Big [\rho h'( \rho) \nabla_{x}\big (h'(\rho) - C_\kappa \triangle_x \rho \big)  \nonumber\\
   &\ \ +C_\kappa \dx \left(\rho \nabla_{x}\big (h'(\rho) - C_\kappa \triangle_x\rho \big)  \right)    \nabla_x\rho 
   - C_\kappa\rho\nabla_{x}\big (h'(\rho) - C_\kappa \triangle_x \rho \big) \triangle_x \rho   \Big ] \nonumber\\
   &\ 
  =  - \rho \left |\nabla_x \left ( h'( \rho) - C_\kappa \triangle_x \rho \right) \right |^2
    \leq 0\, .
    \label{eq:entrcahnhill}
\end{align}
The above relations serve as  starting point to perform the relative energy computation in the next section.

\subsection{Relative energy estimate }\label{subsec:relenKorteasyrelax}
Next, we devise a relative energy estimate valid between a weak solution of \eqref{eq:Kortcap} and  a strong solution of \eqref{eq:cahnhill}.
This will be used to prove rigorously the diffusive relaxation limit for the case of constant capillarity coefficient $\kappa(\rho) = C_\kappa>0$ 
and for  $\gamma$--law gases $p(\rho) = k\rho^\gamma$.
To this end, following ideas from Section \ref{sec:KS} and \cite{LT12a, LT12b},  we rewrite the Cahn-Hilliard equation \eqref{eq:cahnhill} 
as a correction of the Euler-Korteweg system \eqref{eq:Kortcap},
\begin{equation}
    \begin{cases}
	\displaystyle{\bar\rho_{t} +\frac{1}{\e}\dx \bar m =0}&   \\
	& \\
       \displaystyle{ \bar m_{t} + \frac{1}{\e}\dx \frac{\bar m\otimes \bar m}{\bar\rho} 
     	= -\frac{1}{\e^{2}}\bar m} - \frac{1}{\e}\bar\rho \nabla_x \big (h'(\bar\rho) - C_\kappa \triangle_x\bar\rho \big)+ e(\bar \rho, \bar m) & \\[6pt]
	\qquad \qquad \qquad \qquad \quad= \displaystyle{ -\frac{1}{\e^{2}}\bar m + \frac{1}{\e}\dx \bar S + e(\bar \rho, \bar m)} \, ,& 
    \end{cases}
    \label{eq:Kortcapbar}
\end{equation}
 by introducing 
 $$
 \bar m = - \e \bar\rho \nabla_x \big (h'(\bar\rho) - C_\kappa \triangle_x\bar\rho \big) \, ,
 $$
and setting the error term $e(\bar \rho, \bar m)$ to be given by
\begin{align}
   \bar e :=  e(\bar \rho, \bar m) &= \frac{1}{\e} \dx \left (\frac{\bar 
       m\otimes \bar m}{ \bar\rho} \right ) +  \bar m_t  \nonumber\\
       &= 
       \e  \dx \Big (\bar\rho
       \nabla_{x} \big ( h'(\bar \rho) - C_\kappa \triangle_x\bar\rho\big )\otimes \nabla_{x} \big ( h'(\bar \rho) - C_\kappa \triangle_x\bar\rho \big )\Big ) 
       \nonumber\\ 
       &\ 
       -\e  \big ( \bar\rho \nabla_{x} \big ( h'(\bar \rho) - C_\kappa \triangle_x\bar\rho \big ) \big )_t  
       \nonumber\\
       &= O(\e).
       \label{eq:errortermKortcapbar}
\end{align}
Accordingly, \eqref{eq:entrcahnhill} is rewritten as
\begin{align}
   &\partial_t \left  ( \eta(\bar\rho,\bar m) +  \frac{C_\kappa}{2} |\nabla_x\bar\rho|^2 \right )  +\frac{1}{\e}\dx  \big (q(\bar\rho, \bar m) - C_\kappa \bar m \triangle_x \bar\rho + C_\kappa \nabla_x \bar\rho\dx\bar m \big )\nonumber\\
   &\ =  
    - \frac{1}{\e^{2}}\frac{|\bar m|^{2}}{\bar\rho} + \frac{\bar m}{\bar\rho}\cdot \bar e\, .
    \label{eq:entrKort2bar}
\end{align}
Following the general recipe \eqref{relpote} and \eqref{relkine}, the relative energy is defined via
\begin{equation*}
 \eta (\rho,m \left |  \bar \rho, \bar m\right. ) +
\frac{C_\kappa}{2}|\nabla_x(\rho - \bar\rho)|^2 = \frac{1}{2}\rho\left | \frac{m}{\rho} - \frac{\bar m}{\bar\rho}\right|^2 + h(\rho \left |  \bar \rho \right. ) + \frac{C_\kappa}{2}|\nabla_x(\rho - \bar\rho)|^2\, .
\end{equation*}

Before proving our main result, we recall the notion of weak solutions considered here, either conservative or dissipative \cite{GLT15}.
\begin{definition}  \label{def:weakCH}
(i)  A function $( \rho,  m)$ with  $\rho \in C([0, \infty) ; L^1 ( \T^n ) )$, $m \in C  \big ( ( [0, \infty) ;  \big ( L^1(\T^n) \big )^n \big )$,
$\rho \ge 0$,  is a weak
solution of \eqref{eq:Kortcap}   if 
$\frac{m \otimes m}{\rho}$, $S \in L^1_{loc}  \left ( ( (0, \infty) \times \T^n ) \right )^{n \times n}$,  and $(\rho, m)$ satisfy
\begin{align}
- \iint \Big[ \rho \psi_t + m \cdot \nabla_x \psi \Big]dx dt  = \int \rho (x, 0) \psi (x, 0) dx  \, , 
\\
- \iint \left [m \cdot \varphi_t + \frac{1}{\e}\left (\frac{m \otimes m}{\rho} : \nabla_x \varphi - S : \nabla_x \varphi \right)\right ] dx dt
\nonumber
\\
 = - \frac{1}{\e^2}\iint m\cdot \varphi dxdt +   \int m(x,0) \cdot \varphi(x,0) dx   \, ,
\end{align}
for all $\psi \in C^1_c \left ( [0, \infty) ; C^1_p (\T^n) \right )$ and $\varphi  \in C^1_c \left ( [0, \infty) ;  \big ( C^1_p (\T^n)  \big )^n \right )$.

\smallskip
(ii)  If,  in addition, $\eta(\rho,m) +\tfrac12 C_\kappa |\nabla_x \rho |^2 \in C([0, \infty) ; L^1 ( \T^n ) )$ and  $(\rho, m)$ satisfies
\begin{equation}
 \label{eq:Kortcap2Integr}
 \begin{aligned}
  - \iint \left ( \eta(\rho,m) +\tfrac12 C_\kappa |\nabla_x \rho |^2  \right ) \dot\theta(t) 
& \le   \int \left ( \eta(\rho,m) +\tfrac12 C_\kappa |\nabla_x \rho |^2 \right ) \Big |_{t=0}  \theta(0)dx 
 \\
 &\qquad  - \frac{1}{\eps^2} \iint \frac{|m|^2}{\rho} \theta(\tau) \, dx d\tau  
 \end{aligned}
\end{equation}
for any non-negative $\theta  \in W^{1, \infty} [0, \infty)$  compactly supported on $[0, \infty)$,  
then $(\rho, m)$ is called a dissipative weak solution. 

\smallskip
(iii)  By contrast,
if $\eta(\rho,m) +\tfrac12 C_\kappa |\nabla_x \rho |^2  \in C([0, \infty) ; L^1 ( \T^n ) )$ and it satisfies \eqref{eq:Kortcap2Integr}
as an equality, then $(\rho, m)$  is called a conservative weak solution.

\smallskip
(iv)  We say that a  dissipative (or conservative) weak periodic solution of \eqref{eq:Kortcap} $( \rho,  m)$    with $\rho \ge 0$
has finite total mass and energy if 
\begin{align*}
\sup_{t\in (0,T)} \int_{\T^n}  \rho  dx &\le K_1 < \infty\, ,
\\
 \sup_{t\in (0,T)} \int_{\T^n}
\left (\eta(\rho,m) +\tfrac12 C_\kappa |\nabla_x \rho |^2 \right )  \, dx
 &\le K_2 < \infty \, , 
\end{align*}
\end{definition}

The issue of existence of solutions for the Euler-Korteweg system \eqref{eq:Kortcap} is a subject of active current study and open in its full generality.
We refer to \cite{BDD07} for local well-posedness in Sobolev spaces, and to the recent paper  \cite{AH16} for global well-posedness of strong solutions
for small irrotational data  in dimensions greater than three. Results on global existence for finite energy solutions are available for the system 
of quantum hydrodynamics (\cite{AM09,AM12}, \cite{GM97})
due to its special connection to the linear Schroedinger equation via the Madelung transformation. 
Finally, we refer to \cite{DFM15}, \cite{GLT15} for various well or ill posedness results applying to
general Euler-Korteweg systems.


Following  \cite[Section~3.2]{GLT15}, we  prove a relative energy estimate between a weak periodic solution of \eqref{eq:Kortcap} 
and a strong periodic solution of \eqref{eq:cahnhill}.  We point out that that the main difference
between the present case and  the analysis in \cite[Section~3.2]{GLT15} is 
due to the presence of the friction term and the error term induced by embedding the Cahn-Hilliard equation within the Euler-Korteweg system,
and is manifested in the relative kinetic energy identity analyzed below.
It is worth noting that the regularity requirements needed to justify the calculation below and the definition of the stress $S$ in \eqref{eq:defstressCH} are
satisfied, since the uniform bound for the energy yields
 \begin{equation*}
 \rho\in C([0,T];L^\gamma(\T^n))\ \hbox{and}\ \nabla_x \rho \in C([0,T]; L^2(\T^n))\, 
 \end{equation*}
Note that $\big(\rho F_q\big)(t, \cdot) = \big(\rho \nabla_x\rho\big)(t, \cdot)  \in L^{1}(\T^n) $ since $\rho(t, \cdot)\in L^{2}(\T^n)$,  and the latter property
 comes from $\rho(t, \cdot)\in L^{1}(\T^n)\cap L^{\gamma}(\T^n)$ and, if $\gamma<2$, 
the $L^2$ integrability of $\nabla_x \rho$ and the Gagliardo--Niremberg--Sobolev inequality.  For further details, see \cite{GLT15} and in particular  condition \textbf{(A)} of page 22 and Remark~3.3.
\begin{theorem}\label{theo:relenweakCH}
Let   $(\rho,m)$ be a dissipative or conservative weak solution of  \eqref{eq:Kortcap}  with finite total energy according to Definition \ref{def:weakCH}, and let $\bar\rho$ be a smooth solution  of \eqref{eq:cahnhill}. Then
\begin{align}
\label{eq:RelEnKorGenFinalweakCH}
& \int_{\T^n} \left. \left ( \eta(\rho,m | \, \bar \rho, \bar m \right. ) + \tfrac12 C_\kappa | \nabla_x( \rho - \bar\rho)|^2 \right )dx \Big |_t \nonumber\\
&\ \leq
\int_{\T^n} \left. \left ( \eta(\rho,m | \, \bar \rho, \bar m \right. ) + \tfrac12 C_\kappa | \nabla_x( \rho - \bar\rho)|^2 \right )dx \Big |_0
\nonumber\\
  &\ \  -\frac{1}{\e^2}  \iint_{[0,t]\times\T^n}  \rho \left |\frac{m}{\rho} - \frac{\bar m}{\bar \rho}  \right |^2 dxd\tau  - \iint_{[0,t]\times\T^n} e(\bar \rho, \bar m)\cdot \frac{\rho}{\bar\rho}\left ( \frac{m}{\rho} - \frac{\bar m}{\bar \rho} \right )  dxd\tau
\nonumber\\
&\ \ - \frac{1}{\e}\iint_{[0,t]\times\T^n}  \dx \left (\frac{\bar m}{\bar\rho}\right )\Big [   p(\rho \left  | \bar \rho \right .) + \tfrac12 C_\kappa | \nabla_x( \rho - \bar\rho)|^2 \Big ]dxd\tau
\nonumber\\
&\ 
- \frac{C_\kappa}{\e}\iint_{[0,t]\times\T^n} \left [ \nabla_x \left (\frac{\bar m}{\bar\rho} \right) : \nabla_x(\rho - \bar\rho)\otimes\nabla_x(\rho - \bar\rho)\right.
\nonumber\\
&\ \ \left.
+ \nabla_x \dx \left (\frac{\bar m}{\bar\rho}\right ) \cdot (\rho-\bar\rho)\nabla_x(\rho-\bar\rho)\right]dxd\tau
\nonumber\\
&\ \   - \frac{1}{\e}\iint_{[0,t]\times\T^n}  \rho \nabla_x  \left(    \frac{\bar m}{\bar\rho}\right )  :  \left( \frac{m}{\rho}-   \frac{\bar m}{\bar\rho}\right )\otimes \left( \frac{m}{\rho}-   \frac{\bar m}{\bar\rho}\right ) dxd\tau\, .
\end{align}
\end{theorem}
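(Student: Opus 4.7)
The plan mirrors the proof of Theorem \ref{prop:relenKS}, with the added complications that the potential energy carries a capillary contribution $\tfrac12 C_\kappa|\nabla_x\rho|^2$ and the stress \eqref{eq:defstressCH} contains derivatives of $\rho$ of orders one and two. First I would apply the cutoff $\theta(\tau)$ of \eqref{testthetaS} to the integrated dissipative energy inequality \eqref{eq:Kortcap2Integr}, let $\mu\downarrow 0$, and obtain
\begin{equation*}
\int_{\T^n}\bigl(\eta(\rho,m)+\tfrac{C_\kappa}{2}|\nabla_x\rho|^2\bigr)dx\,\Big|_{\tau=0}^{t}\le -\frac{1}{\e^2}\iint_{[0,t]\times\T^n}\frac{|m|^2}{\rho}\,dx d\tau.
\end{equation*}
Time-integration of \eqref{eq:entrKort2bar} provides the counterpart equality for the approximate pair $(\bar\rho,\bar m)$, up to an inhomogeneity $\iint\tfrac{\bar m}{\bar\rho}\cdot\bar e\,dxd\tau$.

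Next I would use the weak forms of the mass and momentum equations for $(\rho,m)$, tested respectively with
\[
\psi=\theta(\tau)\bigl(h'(\bar\rho)-C_\kappa\triangle_x\bar\rho-\tfrac12|\bar u|^2\bigr),\qquad \varphi=\theta(\tau)\,\bar u,
\]
where $\bar u=\bar m/\bar\rho=O(\e)$ is smooth. Forming the combination [energy ineq.\ for $(\rho,m)$] $-$ [energy id.\ for $(\bar\rho,\bar m)$] $-$ [tested mass] $-$ [tested momentum] assembles on the left-hand side the relative kinetic part $\tfrac12\rho|u-\bar u|^2$ and, after invoking the symmetric identity
\begin{equation*}
\int_{\T^n}\nabla_x\bar\rho\cdot\nabla_x(\rho-\bar\rho)\,dx=-\int_{\T^n}\triangle_x\bar\rho\,(\rho-\bar\rho)\,dx,
\end{equation*}
the relative internal energy $h(\rho|\bar\rho)+\tfrac{C_\kappa}{2}|\nabla_x(\rho-\bar\rho)|^2$, which is precisely $\cE(\rho|\bar\rho)$ for the chosen functional.

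The main technical obstacle is the reorganization of the stress contributions. Testing the momentum equation against $\varphi$ produces $\tfrac1\e\iint\nabla_x\varphi:(S-\bar S)\,dxd\tau$, and $S-\bar S$ contains the bulk difference $-(p(\rho)-p(\bar\rho))\I$ together with the capillary pieces $\tfrac{C_\kappa}{2}(|\nabla_x\rho|^2-|\nabla_x\bar\rho|^2)\I+C_\kappa(\rho\triangle_x\rho-\bar\rho\triangle_x\bar\rho)\I-C_\kappa(\nabla_x\rho\otimes\nabla_x\rho-\nabla_x\bar\rho\otimes\nabla_x\bar\rho)$. Transferring derivatives onto $\bar u$ via repeated integration by parts reorganizes these differences into the relative pressure $-\tfrac1\e\dx\bar u\,p(\rho|\bar\rho)$, the capillary pressure correction $-\tfrac{C_\kappa}{2\e}\dx\bar u\,|\nabla_x(\rho-\bar\rho)|^2$, the tensor term $-\tfrac{C_\kappa}{\e}\nabla_x\bar u:\nabla_x(\rho-\bar\rho)\otimes\nabla_x(\rho-\bar\rho)$, and the higher-order remainder $-\tfrac{C_\kappa}{\e}\nabla_x\dx\bar u\cdot(\rho-\bar\rho)\nabla_x(\rho-\bar\rho)$, matching the capillary contributions in \eqref{eq:RelEnKorGenFinalweakCH}. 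The regularity $\rho\in C([0,T];L^\gamma(\T^n))$, $\nabla_x\rho\in C([0,T];L^2(\T^n))$ stated before the theorem, combined with the smoothness of $\bar\rho$, justifies all these manipulations.

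Finally, as in the passage from \eqref{eq:int1} to \eqref{eq:int2}, I would substitute the evolution equation
\[
\partial_\tau\bar u+\tfrac1\e(\bar u\cdot\nabla_x)\bar u=-\tfrac1{\e^2}\bar u-\tfrac1\e\nabla_x\bigl(h'(\bar\rho)-C_\kappa\triangle_x\bar\rho\bigr)+\bar e/\bar\rho
\]
into the cross terms stemming from $\partial_\tau\bar u\cdot(m-\bar m)$ and $\nabla_x\bar u:(\rho u\otimes u-\bar\rho\bar u\otimes\bar u)$. The $-\tfrac1{\e^2}\bar u$ piece combines with the raw dissipation to yield the relative damping $\tfrac1{\e^2}\rho|u-\bar u|^2$, the convective cancellation produces $-\tfrac1\e\rho\nabla_x\bar u:(u-\bar u)\otimes(u-\bar u)$, and the inhomogeneity $\bar e/\bar\rho$ generates the last error term $-\iint\bar e\cdot\tfrac{\rho}{\bar\rho}(u-\bar u)\,dxd\tau$. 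Collecting all contributions yields exactly \eqref{eq:RelEnKorGenFinalweakCH}.
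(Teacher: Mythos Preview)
Your proposal is correct and follows essentially the same route as the paper's proof: the same cutoff argument on the energy inequality, the same test functions $\psi=\theta(\tau)\bigl(h'(\bar\rho)-C_\kappa\triangle_x\bar\rho-\tfrac12|\bar u|^2\bigr)$ and $\varphi=\theta(\tau)\bar u$ in the weak mass and momentum balances, the same use of the $\bar u$-equation to extract the relative damping, convective, and error terms, and the same integration-by-parts reorganization of the capillary stress pieces. The only cosmetic difference is that the paper writes the weak formulations directly for the differences $(\rho-\bar\rho,m-\bar m)$ and makes explicit the term $C_\kappa\,\partial_\tau(\nabla_x\bar\rho)\cdot\nabla_x(\rho-\bar\rho)$ (converted via the continuity equation into $\tfrac{C_\kappa}{\e}\nabla_x\dx(\bar\rho\bar u)\cdot\nabla_x(\rho-\bar\rho)$) before carrying out the capillary rearrangement; this is exactly the content you fold into your ``repeated integration by parts'' step.
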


\begin{proof} Let $(\rho, m)$  a weak dissipative (or conservative) solution of \eqref{eq:Kortcap}, let $(\brho , \bar m)$  be a strong 
 solution or \eqref{eq:Kortcapbar}, and as already noted embed the Cahn-Hilliard equation into the Euler-Korteweg system
 with an error term \eqref{eq:Kortcapbar}.
 In \eqref{eq:Kortcap2Integr} we consider $\theta(\tau)$ given in  \eqref{testthetaS}
and let  $\mu \downarrow 0$ to conclude
\begin{equation}
\label{lem:ret1CH}
  \left. \int_{\mathbb{T}^n}\Big ( \eta(\rho,m) + \tfrac12 C_\kappa|\nabla_x\rho|^2) \Big)dx  \right |^{t}_{\tau=0}
\leq    -  \frac{1}{\eps^2} \iint_{[0,t]\times\T^n} \frac{|m|^2}{\rho} \, dx d\tau  
\end{equation}
Moreover, time integration of \eqref{eq:entrKort2bar} gives
\begin{equation}\label{lem:retCH}
\begin{aligned}
&\left.  \int_{\T^n}   \Big (\eta(\bar\rho,\bar m)  +    \tfrac{1}{2} C_\kappa  \, |\nabla_x\bar\rho|^2\Big )dx \right |^{t}_{\tau=0}  
\\
&\quad= -  \frac{1}{\e^{2}}\iint_{[0,t]\times\T^n}\frac{|\bar m|^{2}}{\bar\rho} dx d\tau  + 
   \iint_{[0,t]\times\T^n} \frac{\bar m}{\bar\rho}\cdot \bar e  dx d\tau \, .
\end{aligned}
\end{equation}

We evaluate now the dynamics of the linear part of the relative energy, starting from the  weak formulation 
for the equations satisfied by the differences $(\rho - \bar\rho, m- \bar m)$:
\begin{align}
- \iint_{[0,+\infty)\times\T^n} \Big ( {\psi}_t (\rho - \bar \rho) + \frac{1}{\e} \psi_{x_i} (m_i - \bar m_i)  \Big ) dx dt
- \int_{\mathbb{T}^n} \psi (\rho - \bar \rho) \Big |_{t=0} \, dx = 0\, ,
\label{weakmassCH}
\\
- \iint_{[0,+\infty)\times\T^n} \left (\varphi_t \cdot (m - \bar m) + \frac{1}{\e} \partial_{x_i}\varphi_j  \left (\frac{m_i m_j}{\rho} - \frac{\bar m_i \bar m_j}{\bar\rho} \right)
- \frac{1}{\e}\partial_{x_i} \varphi_j \Big(S_{ij} - \bar S_{ij} \Big)  \right ) dx dt
\nonumber
\\
- \int_{\mathbb{T}^n} \varphi \cdot (m - \bar m) \Big |_{t=0}  dx 
= - \frac{1}{\e^2}\iint_{[0,+\infty)\times\T^n} (m - \bar m)\cdot \varphi dxdt  - \iint_{[0,+\infty)\times\T^n} \bar e\cdot \varphi dxdt \, ,
\label{weakmomentumCH}
\end{align}
where, as before, $\varphi$, $\psi$ are Lipschitz test functions 
compactly supported in $ [0,+\infty)$ in time  and periodic in space.

In the above relations, we  select  the test functions:
$$
\psi = \theta(\tau)\left( h'(\bar\rho)  - C_\kappa \triangle_x\bar\rho -\frac{1}{2}\frac{|\bar m|^2}{\bar\rho^2}   \right ) , \quad
\varphi = \theta(\tau) \frac{\bar m}{\bar\rho} \,  ,
$$
with $\theta(\tau)$ as in \eqref{testthetaS}.
Then \eqref{weakmassCH} with  $\mu\downarrow    0$ gives the linear part of the potential energy:
\begin{align*}
& \int_{\mathbb{T}^n}  \Big (h'(\bar\rho)  (\rho - \brho) + C_\kappa \nabla_x\bar\rho\cdot\nabla_x (\rho - \bar \rho)   -\frac{1}{2}\frac{|\bar m|^2}{\bar\rho^2}  (\rho - \bar \rho) 
  \Big ) \Bigg |_{\tau=0}^t dx 
\nonumber\\
&\quad - \iint_{[0,t]\times\T^n} \left [\partial_\tau\left( h'(\bar\rho)   -\frac{1}{2}\frac{|\bar m|^2}{\bar\rho^2}   \right ) (\rho - \bar \rho) 
+ C_\kappa\partial_\tau\big (\nabla_x\bar\rho\big) \cdot  \nabla_x (\rho - \bar \rho) \right]dx d\tau 
\nonumber\\
&\quad -\frac{1}{\e} \iint_{[0,t]\times\T^n} \nabla_x\left(  h'(\bar\rho)  - C_\kappa \triangle_x\bar\rho-\frac{1}{2}\frac{|\bar m|^2}{\bar\rho^2}   \right ) \cdot (m- \bar m)   dx d\tau = 0\, .
\end{align*}

Moreover, from \eqref{weakmomentumCH} one has 

\begin{align*}
& \int_{\mathbb{T}^n} \frac{\bar m}{\bar\rho}\cdot (m - \bar m)  \Big |_{\tau=0}^tdx
- \iint_{[0,t]\times\T^n} \partial_\tau\left( \frac{\bar m}{\bar\rho}   \right ) \cdot (m-\bar m) dx d\tau 
\nonumber\\
&\ \ - \frac{1}{\e}
\iint_{[0,t]\times\T^n} \partial_{x_i}\left( \frac{\bar m_j}{\bar\rho}   \right ) \left  ( \Big ( \frac{m_im_j}{\rho}- \frac{\bar m_i\bar m_j}{\bar\rho}  \Big )
  -  ( S_{ij} - \bar S_{ij} )  \right )  dx d\tau 
  \nonumber\\ 
  &\ = -\frac{1}{\e^2}\iint_{[0,t]\times\T^n} \frac{\bar m}{\bar\rho} \cdot ( m - \bar m )dxd\tau    -  \iint_{[0,t]\times\T^n} \frac{\bar m}{\bar\rho}\cdot \bar e dx d\tau\, .
\end{align*}

Combining  the above equations we conclude
\begin{align}
& \int_{\mathbb{T}^n} \Big [ \eta(\rho, m \left | \bar \rho, \bar m \right.  ) 
+ \tfrac{1}{2}\C|\nabla_x(\rho - \bar \rho)|^2  \Big ]\Bigg |_{\tau=0}^t dx 
\nonumber\\
&\ \leq - \frac{1}{\e^2}\iint_{[0,t]\times\T^n} \Big [  \rho|u|^2  - \bar\rho|\bar u|^2   - \bar u \cdot  ( \rho u - \bar \rho\bar u ) \Big ]dx d\tau
\nonumber\\
&\ \ - \iint_{[0,t]\times\T^n} \Big [ \partial_\tau\big(  h'(\brho)  - \tfrac{1}{2} |\bar u|^2  \big ) (\rho - \bar \rho)  
+ C_\kappa\partial_\tau\big (\nabla_x\bar\rho\big) \cdot  \nabla_x (\rho - \bar \rho)
\nonumber\\
&\qquad
+ \partial_\tau(\bar u )\cdot (\rho u - \bar\rho \bar u)  \Big ] dxd\tau 
\nonumber\\
&\ \ - \frac{1}{\e} \iint_{[0,t]\times\T^n} \nabla_x\big(  h'(\brho)  - C_\kappa \triangle_x\bar\rho - \tfrac{1}{2}|\bar u|^2 \big ) \cdot (\rho u- \bar \rho \bar u)   dx d\tau
\nonumber\\
&\ \ - \frac{1}{\e} \iint_{[0,t]\times\T^n} \partial_{x_i} ( \bar u_j )  \Big (  ( \rho u_i u_j -  \bar \rho \bar u_i \bar u_j   )
  - (S_{ij} - \bar S_{ij})     \Big )   dx d\tau
  \, ,
\label{eq:int1CH}
\end{align}
where $m = \rho u$ and $\bar m = - \e \bar\rho \nabla_x \big (h'(\bar\rho) - C_\kappa \triangle_x\bar\rho \big) = \bar\rho \bar u $.
For this model, $\bar u$ verifies
\begin{equation*}
\partial_t \bar u + \frac{1}{\e}( \bar u \cdot \nabla_x ) \bar u  = -  \frac{1}{\e^2} \bar u  - \frac{1}{\e} \nabla_x \big (h'(\bar\rho) - C_\kappa \triangle_x\bar\rho \big) + \frac{\bar e}{\bar \rho}\, ,
\end{equation*}
so that
\begin{align*}
&\partial_\tau \big (- \tfrac{1}{2} |\bar u|^2  \big )(\rho - \bar \rho) + \partial_\tau(\bar u )\cdot (\rho u - \bar\rho \bar u) + \frac{1}{\e} 
\nabla_x\big( - \tfrac{1}{2}|\bar u|^2 \big) \cdot (\rho u- \bar \rho \bar u)
\\
&\ +\frac{1}{\e}  \partial_{x_i} ( \bar u_j )     ( \rho u_i u_j -  \bar \rho \bar u_i \bar u_j   ) = -\frac{1}{\e^2}\rho \bar u\cdot(u-\bar u) - \frac{1}{\e}\rho  \nabla_x \big (h'(\bar\rho) - C_\kappa \triangle_x\bar\rho \big)
\cdot(u-\bar u) 
\\
&\ + \bar e \frac{\rho}{\bar\rho}\cdot(u-\bar u)  -\frac{1}{\e} \rho \nabla_x    (\bar u) :   (u - \bar u) \otimes  (u - \bar u)\, .
\end{align*}
Then, as in the previous section,  
using also the  reformulation of the stress given by \eqref{eq:defstressCH},
$$
\dx S = - \rho \nabla_x \big (h'(\rho) - C_\kappa \triangle_x\rho \big) \, ,
$$
we rewrite \eqref{eq:int1CH} as follows:
\begin{align}
& \int_{\mathbb{T}^n} \Big [ \eta(\rho, m \left | \bar \rho, \bar m \right.  ) 
+ \tfrac{1}{2}\C|\nabla_x(\rho - \bar \rho)|^2    \Big ]\Bigg |_{\tau=0}^t dx 
\nonumber\\
&\ \leq - \frac{1}{\e^2}\iint_{[0,t]\times\T^n} \rho |u - \bar u|^2dx d\tau - \iint_{[0,t]\times\T^n} \bar e \frac{\rho}{\bar\rho}\cdot(u-\bar u) dx d\tau
\nonumber\\
&\ \ - \frac{1}{\e}\iint_{[0,t]\times\T^n}  \rho \nabla_x    (\bar u) :   (u - \bar u) \otimes  (u - \bar u)dx d\tau 
- \frac{1}{\e}\iint_{[0,t]\times\T^n} \dx (\bar u) p(\rho \left | \bar \rho \right.  )dx d\tau
\nonumber\\
&\ \ -
 \frac{1}{\e} \C\iint_{[0,t]\times\T^n} \left [ \tfrac12 \dx (\bar u) \Big( |\nabla_x \rho|^2 - |\nabla_x \bar\rho|^2 \Big) + \nabla_x\dx( \bar u)\cdot \Big(\rho\nabla_x\rho -  
\bar \rho\nabla_x\bar\rho\Big)\right ]dx d\tau
\nonumber\\
&\ \ -
 \frac{1}{\e} \C\iint_{[0,t]\times\T^n}\nabla_x    (\bar u) : \Big[ \nabla_x\rho\otimes \nabla_x\rho - \nabla_x\bar\rho\otimes \nabla_x\bar\rho   \Big] dxd\tau
\nonumber\\
&\ \ +
 \frac{1}{\e} \C\iint_{[0,t]\times\T^n}\Big [ \nabla_x\dx (\bar \rho \bar u) \cdot \nabla_x(\rho - \bar\rho) + \bar u \cdot (\rho - \bar\rho) \nabla_x\triangle_x\bar\rho\Big ] dxd\tau \, .
\label{eq:int2CH}
\end{align}
Then we compute
\begin{align*}
 \iint_{[0,t]\times\T^n}  \nabla_x\dx (\bar \rho \bar u) \cdot \nabla_x(\rho - \bar\rho)  dxd\tau 
&= 
\iint_{[0,t]\times\T^n} 
 \Big[  \nabla_x\dx (    \bar u) \cdot \bar \rho   \nabla_x(\rho - \bar\rho)  
\\
&\  +    \dx (    \bar u)\nabla_x \bar \rho \cdot    \nabla_x(\rho - \bar\rho)  
 + \nabla_x \big( \bar u \cdot \nabla_x\bar\rho\big)\cdot \nabla_x(\rho - \bar\rho) 
 \Big ]dxd\tau
\end{align*}
and integration by parts  gives
\begin{align*}
 \iint_{[0,t]\times\T^n}\bar u \cdot (\rho - \bar\rho) \nabla_x\triangle_x\bar\rho   dxd\tau &=
  \iint_{[0,t]\times\T^n} \nabla_x\dx \big((\rho - \bar\rho)\bar u \big) \cdot \nabla_x\bar\rho dxd\tau
 \\
 &= 
 \iint_{[0,t]\times\T^n}\Big[  \nabla_x\dx (    \bar u) \cdot (\rho -\bar \rho )  \nabla_x \bar\rho
 +    \dx (    \bar u)\nabla_x \bar \rho \cdot    \nabla_x(\rho - \bar\rho)  
 \\ 
 &\  + \nabla_x \big( \bar u \cdot \nabla_x(\rho -\bar\rho)\big)\cdot \nabla_x\bar\rho
 \Big ]dxd\tau\, .
\end{align*}
Moreover, 
\begin{align*}
&\iint_{[0,t]\times\T^n} 
     \dx (    \bar u)\nabla_x \bar \rho \cdot    \nabla_x(\rho - \bar\rho)  
 + \nabla_x \big( \bar u \cdot \nabla_x\bar\rho\big)\cdot \nabla_x(\rho - \bar\rho) + \nabla_x \big( \bar u \cdot \nabla_x(\rho -\bar\rho)\big)\cdot \nabla_x\bar\rho
 \Big ]dxd\tau 
 \\
 &\ =
  \iint_{[0,t]\times\T^n} \partial_{x_i}\bar u_j \Big [ \partial_{x_j}\bar\rho\partial_{x_i}(\rho - \bar\rho) + 
  \partial_{x_j}(\rho -\bar\rho)\partial_{x_i}\bar\rho
  \Big ]dxd\tau 
   \\
 &\ \
  + 
  \iint_{[0,t]\times\T^n} \bar u_j \Big [\partial_{x_i} \partial_{x_j}\bar\rho\partial_{x_i}(\rho - \bar\rho) + 
 \partial_{x_i} \partial_{x_j}(\rho -\bar\rho)\partial_{x_i}\bar\rho
    \Big ]dxd\tau 
  + 
  \iint_{[0,t]\times\T^n} \partial_{x_j}\bar u_j \Big [ \partial_{x_i}\bar\rho\partial_{x_i}(\rho-\bar\rho)
   \Big ]dxd\tau 
   \\
 &\ = 
 \iint_{[0,t]\times\T^n} \partial_{x_i}\bar u_j \Big [  \partial_{x_j}\bar\rho\partial_{x_i}\rho + \partial_{x_j}\rho\partial_{x_i}\bar\rho - 2 \partial_{x_j}\bar\rho\partial_{x_i}\bar\rho
  \Big ]dxd\tau \, .
\end{align*}
Finally, using the above relations in \eqref{eq:int2CH} we end up with \eqref{eq:RelEnKorGenFinalweakCH} and  the proof is complete.
\end{proof}

\subsection{Convergence to Cahn-Hilliard in the large friction limit}\label{convCH}
Having established \eqref{eq:RelEnKorGenFinalweakCH}, we  proceed as in the previous section to obtain 
a stability estimate and complete the convergence in the diffusive relaxation limit.
Let us introduce the following quantity, based on the relative energy,
\begin{equation}
\label{defpsi}
\Psi(t) = \int_{\T^n} \Big [\eta(\rho, m \left | \bar \rho, \bar m \right.  ) 
+ \tfrac{1}{2}\C|\nabla_x(\rho - \bar \rho)|^2 \Big ]dx 
\end{equation}
to control   the distance between the  solutions of \eqref{eq:Kortcap} and \eqref{eq:Kortcapbar}. As it is manifest from Lemma~\ref{lem:generalconvEuler3d} and 
 Remark~\ref{rem:hnorm}, if $\gamma\geq2$ the relative potential energy, and thus $\Psi$, in particular controls the $L^2$ norm of the distance $\rho - \bar\rho$; see \eqref{eq:hnorm2}. 
Hence, all the terms on the right and side of \eqref{eq:RelEnKorGenFinalweakCH} are estimated in terms of $\Psi(t)$
and we can follow the lines of the proof of Theorem~\ref{th:finalKS} to prove:

\begin{theorem}\label{th:finalCH}
Let  $T>0$ be fixed, let $p(\rho)$ satisfy \eqref{hypthermo}, \eqref{hyppress} and \eqref{eq:growthh} with $\gamma\geq2$,  let  $(\rho,m)$ be a dissipative (or conservative) weak solution of  \eqref{eq:Kortcap}  with finite total energy according to Definition \ref{def:weakCH}, and let $\bar\rho$ be a smooth solution  of \eqref{eq:cahnhill} such that $\bar\rho\geq \delta >0$. Then  the stability estimate
\begin{equation*}
\Psi(t) \leq C \big (\Psi(0) + \e^4 \big ), \quad t\in [0,T] \, ,
\end{equation*}
holds true,  with $C$  a positive constant depending only on $T$, $K_1$, $\bar\rho$ and its derivatives.
Moreover, if $\Psi(0)\to 0$ as $\e \downarrow 0$, then
\begin{equation*}
\sup_{t\in[0,T]}\Psi(t) \to 0, \ \hbox{as}\ \e \downarrow 0\, .
\end{equation*}
\end{theorem}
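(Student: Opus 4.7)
The plan is to take the relative energy inequality \eqref{eq:RelEnKorGenFinalweakCH} from Theorem \ref{theo:relenweakCH} as the starting point, estimate each of the right‑hand side terms by $\Psi(\tau) + O(\e^4)$ modulo a fraction of the dissipation, and then close the argument with Gronwall. The asymptotic smallness $\bar u = \bar m/\bar\rho = -\e\nabla_x(h'(\bar\rho)-C_\kappa\triangle_x\bar\rho)$ is the central structural input: combined with the smoothness of $\bar\rho$ and $\bar\rho\ge\delta>0$, it yields $\tfrac1\e\nabla_x\bar u = O(1)$, $\tfrac1\e\nabla_x\dx\bar u = O(1)$, and from \eqref{eq:errortermKortcapbar} also $\bar e = O(\e)$. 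These bounds are what converts every prefactor $\tfrac1\e$ appearing in \eqref{eq:RelEnKorGenFinalweakCH} into an $O(1)$ quantity.

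Next I would bound each term in turn. For the error contribution, Young's inequality gives
\begin{equation*}
\left|\iint \bar e \cdot\tfrac{\rho}{\bar\rho}(u-\bar u)\,dxd\tau\right|
\le \tfrac{1}{2\e^2}\iint\rho|u-\bar u|^2\,dxd\tau + \tfrac{\e^2}{2}\iint \rho\left|\tfrac{\bar e}{\bar\rho}\right|^2 dxd\tau,
\end{equation*}
where the first piece is absorbed into the dissipation on the left of \eqref{eq:RelEnKorGenFinalweakCH} and the second is $O(\e^4)$ since $|\bar e/\bar\rho|^2=O(\e^2)$ and the total mass is $K_1$. The quadratic velocity term $\tfrac1\e\rho\nabla_x\bar u:(u-\bar u)\otimes(u-\bar u)$ has $O(1)$ prefactor, and $\int\rho|u-\bar u|^2\,dx\le 2\Psi(\tau)$. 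For the pressure and capillary terms carrying $\tfrac1\e\dx\bar u$ or $\tfrac1\e\nabla_x\bar u$, the key inputs are the inequality $|p(\rho|\bar\rho)|\le A\,h(\rho|\bar\rho)$ of Lemma \ref{lem:generalconvEuler3d}(b) together with the fact that $\tfrac{C_\kappa}{2}|\nabla_x(\rho-\bar\rho)|^2$ is already part of $\Psi$; both give pointwise bounds by $\Psi(\tau)$.

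The term that actually requires the $\gamma\ge 2$ hypothesis is the cross term
\begin{equation*}
\tfrac{C_\kappa}{\e}\iint \nabla_x\dx\bar u \cdot (\rho-\bar\rho)\nabla_x(\rho-\bar\rho)\,dxd\tau,
\end{equation*}
which I would split via Cauchy–Schwarz into $\tfrac12 |\rho-\bar\rho|^2 + \tfrac12|\nabla_x(\rho-\bar\rho)|^2$. The gradient part is in $\Psi$; the $L^2$ part of $\rho-\bar\rho$ is controlled by $h(\rho|\bar\rho)$ via Remark \ref{rem:hnorm}, so its spatial integral is bounded by a constant times $\Psi(\tau)$, uniformly over $[0,T]$, using $\bar\rho\in[\delta,\bar R]$ on the compact set $\T^n$. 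This step is where the restriction $\gamma\ge 2$ is essential and is the main obstacle of the proof: for smaller $\gamma$ the $L^2$ control of $\rho-\bar\rho$ near large values would fail and a different treatment of that term would be required.

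Collecting the estimates, \eqref{eq:RelEnKorGenFinalweakCH} reduces to
\begin{equation*}
\Psi(t) + \tfrac{1}{2\e^2}\iint_{[0,t]\times\T^n}\rho|u-\bar u|^2\,dxd\tau \le C\,\Psi(0) + C\,\e^4 + C\int_0^t\Psi(\tau)\,d\tau,
\end{equation*}
with $C$ depending on $T$, $K_1$, and norms of $\bar\rho$ and its derivatives. Gronwall's lemma then yields $\Psi(t)\le C(\Psi(0)+\e^4)$ on $[0,T]$, from which the convergence statement $\sup_{[0,T]}\Psi\to 0$ as $\e\downarrow 0$ follows whenever $\Psi(0)\to 0$.
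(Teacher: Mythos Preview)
Your proposal is correct and follows essentially the same route as the paper: start from the relative energy inequality \eqref{eq:RelEnKorGenFinalweakCH}, use $\bar u=O(\e)$ so that all $\tfrac1\e$-prefactors become $O(1)$, bound the error term via Young's inequality (absorbing half the dissipation and producing the $O(\e^4)$ contribution from the finite mass), control $|p(\rho|\bar\rho)|$ by $h(\rho|\bar\rho)$ via Lemma~\ref{lem:generalconvEuler3d}(b), and invoke Remark~\ref{rem:hnorm} for the $L^2$ control of $\rho-\bar\rho$ in the cross term --- this last step being precisely where $\gamma\ge 2$ enters. The paper's own proof is in fact just a pointer to the argument of Theorem~\ref{th:finalKS}, and you have correctly filled in those details.
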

\begin{remark}\label{rem:finalCH}
The convergence in the general case $\gamma>1$ can also be established, provided a uniform $L^\infty$ bound for both $\rho$ and $\bar\rho$ is available. 
In the particular case $\gamma=2$, we have $h(\rho \left | \bar \rho \right.)=|\rho-\bar\rho|^2$ and  Lemma~\ref{lem:generalconvEuler3d}  is not needed anymore;
in that case we may treat the presence of vacuum  for both  $\rho$ and $\bar\rho$. However, the regularity assumptions for the latter are still necessary, 
and this might be inconsistent  with the presence of vacuum for $\bar\rho$; see also \cite{GLT15}.
\end{remark}

\medskip
\noindent
{\bf Acknowledgements.} 
We thank the anonymous referee for suggesting
to consider the effects of confinement potentials, what led to Section \ref{sec:confinement}.
AET was supported by funding from King Abdullah University of Science and Technology (KAUST).

\end{document}